\newtheorem{theorem}{Theorem}[section]
\newtheorem{lemma}{Lemma}[section]
\newtheorem{corollary}{Corollary}[section]
\newtheorem{proposition}{Proposition}[section]
\newtheorem{remark}{Remark}[section]
\newcounter{theor}
\newtheorem{thm}[theor]{Theorem}
\def\conv{\mathop\mathrm{conv}\nolimits}
\def\s{\mathbb{S}}
\def\E{\mathbb{E}}
\def\R{\mathbb{R}}
\def\N{\mathbb{N}}
\newcommand{\dlat}{\mathrm{d}}
\def \p{\Pi}
\def \pp{\Pi^{\circ}}
\newcommand{\hid}{m}
\def\esc#1{\left\langle #1\right\rangle}
\numberwithin{equation}{section}
\begin{document}
\title[On general versions of the Petty projection inequality]{On general versions of the Petty projection inequality}

\author[F. Marín Sola]{Francisco Marín Sola}
\email{francisco.marin7@um.es}
\address{Departamento de Ciencias, Centro Universitario de la Defensa (CUD), 30729 San Javier (Murcia), Spain. }

\subjclass[2020]{Primary 52A39, 52A40 ; Secondary 52A20, 28A75}
\keywords{Projection body, affine isoperimetric inequalities, $L_p$ Petty projection inequality, random sets.}

\thanks{The author is supported by the grant PID2021-124157NB-I00, funded by MCIN/AEI/10.13039/501100011033/``ERDF A way of making Europe'', as well as by the grant  ``Proyecto financiado por la CARM a través de la convocatoria de Ayudas a proyectos para el desarrollo de investigación científica y técnica por grupos competitivos, incluida en el Programa Regional de Fomento de la Investigación Científica y Técnica (Plan de Actuación 2022) de la Fundación Séneca-Agencia de Ciencia y Tecnología de la Región de Murcia, REF. 21899/PI/22''.}

\begin{abstract}
The classical Petty projection inequality is an affine isoperimetric inequality which constitutes a cornerstone in the affine geometry of convex bodies. By extending the polar projection body to an inter-dimensional operator, Petty's inequality was generalized in \cite{HLPRY23_2} to the so-called $(L_p,Q)$ setting, where $Q$ is an $m$-dimensional compact convex set. In this work, we further extend the $(L_p,Q)$ Petty projection inequality to the broader realm of rotationally invariant measures with concavity properties, namely, those with $\gamma$-concave density (for $\gamma\geq-1/nm$). Moreover, when $p=1$, and motivated by a contemporary empirical reinterpretation of Petty’s result \cite{PPT}, we explore empirical analogues of this inequality. 
\end{abstract}

\maketitle

\section{Introduction}
The classical isoperimetric inequality asserts that among all convex bodies (compact convex sets with non-empty interior) of a given volume, Euclidean balls have the smallest surface area. More precisely, if $K\subset\R^n$ is a convex body and $B_2^n$ is the Euclidean unit ball in $\R^n$,
\begin{equation}\label{e:classical_isoperimetric}
    S(K) \geq S(B_K),
\end{equation}
where $S(\cdot)$ denotes the surface area and $B_K$ is an Euclidean ball with the same volume as $K$. 

A well-known affine strengthening of \eqref{e:classical_isoperimetric} is provided by the Petty projection inequality (see \cite{CMP71}). This result shows that among convex bodies with prescribed volume, the polar projection body $\pp(K)$ (see Section \ref{s:background} for a precise definition) has maximal volume when $K$ is an ellipsoid. In a more detailed manner, for every convex body $K\subset \R^n$
\begin{equation}\label{e:classical_petty}
    |\pp(K)|\leq |\pp(B_K)|,
\end{equation}
where $|\cdot|$ denotes the $n$-dimensional Lebesgue measure or \textit{volume}. The projection body operator and its polar  has attracted a lot of attention during decades becoming a cornerstone of the affine geometry of convex bodies and the Brunn-Minkowski theory. For context and background we refer the interested reader to the excellent books by Gardner \cite{gardner_book}, Gruber \cite{G07} and Schneider \cite{Sch2}.  

Listing all the works related to Petty's inequality \eqref{e:classical_petty} and the projection body  would be out of the scope of this manuscript. Nevertheless, we would  like to point out that the projection body operator was extended to the $L_p$ setting, for $p>1$, by Lutwak, Yang and Zhang in the seminal paper \cite{LYZ00}, where they continued the work of Petty extending \eqref{e:classical_petty} to this realm. Other remarkable extension of this operator is the asymmetric $L_p$ projection body introduced by Ludwig in \cite{ML05}, where she gave a complete classification of it from the perspective of Minkowski valuations (see also \cite{ML02} for extensive study of the classical case). An analogue of \eqref{e:classical_petty} for the latter bodies was proved, among other related results, by Haberl and Schuster in \cite{HS09}. More recently, an empirical version of the classical Petty projection inequality was obtained by Paouris, Pivovarov, and Tatarko \cite{PPT}; their work follows the empirical interpretation of isoperimetric inequalities initiated in \cite{PP12} and further developed in \cite{PP13-2,CEFPP15,PP17-2}.

In this work,  we will be mainly concern about the $(L_p,Q)$ polar projection body, $\pp_{Q,p}(\cdot)$ (see \eqref{e:minkowski_functional_(L_p,Q)}). This generalization was  recently introduced and extensively studied by Haddad, Langharst, Putterman, Roysdon and Ye in \cite{HLPRY23_2}, building on a previous work \cite{HLPRY23}. Among other related results, the following $(L_p,Q)$ Petty projection inequality was established:
\begin{thm}[\cite{HLPRY23_2}]\label{t:Lp-Q_Petty}
    Let $m,n \in \N$, $K \subset\R^n$ and $Q \subset \R^m$ be convex bodies containing the origin in its interior. Then
    $$
    |\pp_{Q,p}( K)|_{_{nm}} \leq  |\pp_{Q,p} (B_K)|_{_{nm}}.
    $$
    Equality holds for $p = 1 $ if and only if $K$ is an ellipsoid; if $p>1$ and $n\geq3$, then there is equality if and only if $K$ is an origin symmetric ellipsoid. When $n=2$ the equality conditions were shown in \cite{Ye24}.
\end{thm}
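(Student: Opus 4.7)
\emph{Proof proposal.} The strategy is to follow the classical Steiner symmetrization approach used by Petty for \eqref{e:classical_petty}, as extended by Lutwak--Yang--Zhang to the $L_p$ setting and here transposed to the inter-dimensional $(L_p,Q)$ framework of \cite{HLPRY23_2}. For $u\in\s^{n-1}$, let $S_u$ denote Steiner symmetrization with respect to the hyperplane $u^\perp$.

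The central intermediate step is a Steiner symmetrization monotonicity: for every convex body $K\subset\R^n$ with $0\in\inter K$ and every $u\in\s^{n-1}$,
$$|\pp_{Q,p}(K)|_{_{nm}} \;\leq\; |\pp_{Q,p}(S_u K)|_{_{nm}}.$$
To prove this, first I would rewrite the volume in polar coordinates as
$$|\pp_{Q,p}(K)|_{_{nm}} = \frac{1}{nm}\int_{\s^{nm-1}} \rho_{\pp_{Q,p}(K)}(\xi)^{nm}\,d\xi,$$
and then unfold the radial function $\rho_{\pp_{Q,p}(K)}$ using the Minkowski functional \eqref{e:minkowski_functional_(L_p,Q)}; this represents each factor as an integral over $\s^{n-1}$ against the surface area measure of $K$, with a kernel involving the support function of $Q$. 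The symmetrization inequality then reduces to a rearrangement-type bound applied to the one-dimensional slices of $K$ parallel to $u$: for $p=1$ a Minkowski/Busemann-type chord argument suffices, while for $p>1$ the natural tool is an $L_p$ version of the Rogers--Brascamp--Lieb--Luttinger rearrangement.

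With monotonicity in hand, I would invoke a classical convergence result for iterated Steiner symmetrizations to produce directions $u_1,u_2,\dots$ such that $S_{u_k}\cdots S_{u_1}K$ converges in Hausdorff distance to a Euclidean ball; since Steiner symmetrizations preserve volume, this ball must be $B_K$. Applying the monotonicity inductively and passing to the limit via the continuity of $K\mapsto|\pp_{Q,p}(K)|_{_{nm}}$ in the Hausdorff metric then yields the claimed inequality.

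The most delicate part is the equality characterization. For $p=1$, one would track equality along every symmetrization step and combine it with the affine invariance of $\pp_{Q,1}$ and the rigidity fact that a convex body stable (up to translation) under every Steiner symmetrization is a Euclidean ball, to conclude that $K$ must be an ellipsoid. For $p>1$ and $n\geq3$, the strict nature of the $L_p$ rearrangement additionally forces central symmetry, narrowing the equality cases to origin-symmetric ellipsoids; the exclusion of $n=2$ reflects the classical obstruction to Lutwak--Yang--Zhang-type rigidity in the plane, which is precisely why the planar case is handled separately in \cite{Ye24}.
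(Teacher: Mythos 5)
Your proposal has the right skeleton --- Steiner-symmetrization monotonicity of $K\mapsto|\pp_{Q,p}(K)|_{_{nm}}$, convergence of iterated Steiner symmetrals to $B_K$, and continuity of $\pp_{Q,p}$ in the Hausdorff metric --- and this is indeed the skeleton behind both \cite{HLPRY23_2} and the paper's own Theorem \ref{t:Q-Petty} (which, with $\nu$ the Lebesgue measure on $\R^{nm}$, recovers exactly the inequality part of the statement). The genuine gap is that the one step carrying all the content, $|\pp_{Q,p}(K)|_{_{nm}}\leq|\pp_{Q,p}(S_uK)|_{_{nm}}$, is only asserted. Writing the volume in polar coordinates and representing $nV_p(K,\xi.Q^t)$ as an integral of $h_{\xi.Q^t}^p h_K^{1-p}$ against the surface area measure of $K$ is correct, but it does not ``reduce to a rearrangement-type bound on the one-dimensional slices of $K$ parallel to $u$'': neither the surface area measure nor the support function of $S_uK$ is obtained from those of $K$ by any slice-wise rearrangement, and Rogers--Brascamp--Lieb--Luttinger (or Christ's Theorem \ref{t:Christ_BL}) applies to densities integrated against a fixed even quasi-concave kernel, a structure that is absent from your representation. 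Likewise, a ``Busemann-type chord argument'' for $p=1$ and an ``$L_p$ version of RBLL'' for $p>1$ are not off-the-shelf tools in this inter-dimensional setting; producing them for general $Q\subset\R^m$ is essentially the content of \cite{HLPRY23_2}. The mechanism that actually makes the Steiner step work is a fiber argument on the polar body itself: decompose $\bar x=(y_i+s_iu)_i$ with $\bar y\in(u^\perp)^m$, note that $\{K_u(t)\}$ from \eqref{e:shadow_system} and $\{(y_i+((1-\lambda)s_i+\lambda s_i')u)_i.Q^t\}$ are linear parameter systems in direction $u$, use Theorem \ref{t:Bianchini_Colesanti_Lp-sum_shadow_systems}, Corollary \ref{c:shepard_convexity} and $|K_u(t)|=|K|$ (Lemma \ref{p:Lp-mixed_volumes_convexity}) to get that $(t,\bar s)\mapsto nV_p\bigl(K_u(t),(y_i+s_iu)_i.Q^t\bigr)$ is jointly convex and even, and conclude that each fiber volume $F_{\bar y}(t)$ is even and quasi-concave, hence maximal at $t=0$, i.e.\ at $S_uK$. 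Your proposal contains neither this mechanism nor a workable substitute.

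The equality characterization is also not addressed by ``tracking equality along every symmetrization step'': the inequality is obtained by passing to the limit along infinitely many symmetrizations, so equality in the theorem does not localize to equality at individual Steiner steps; and even if it did, equality in a single Steiner step does not characterize ellipsoids without substantial additional rigidity work (for $p=1$ one must allow non-centred ellipsoids, for $p>1$ one must force origin symmetry, and the planar case $p>1$, $n=2$ resisted these methods and required the separate paper \cite{Ye24}). Note also that the paper you are working in does not prove the equality cases at all --- it quotes them from \cite{HLPRY23_2} --- so this part of your argument would need to be built from scratch rather than sketched.
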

We would like to remark that the $(L_p,Q)$ polar projection body provides an unifying framework that encompasses the aforementioned generalizations \cite{LYZ00,ML05,HS09}.

Our contributions are twofold. First, following the approach of \cite[Section~8.2]{MilYe} along with a new observation from the $L_p$ setting, we extend Theorem~\ref{t:Lp-Q_Petty} to absolutely continuous rotationally invariant, convex measures (i.e., measures with a $\gamma$-concave density for $\gamma \geq -1/nm$). The result reads as follows.
\begin{theorem}\label{t:Q-Petty}
    Let $K\subset \R^n$ and $Q\subset\R^m$ be convex bodies containing the origin in their interior. Then, for any absolutely continuous rotationally invariant, convex measure $\nu$ in $\R^{nm}$  
    $$
      \nu\left(\pp_{Q,p}(K)\right)\leq  \nu\left(\pp_{Q,p}(B_K)\right).
     $$
\end{theorem}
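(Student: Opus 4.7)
The plan is to adapt the Steiner-symmetrization-based approach of Milman--Ye to the $(L_p,Q)$ setting. There are two main ingredients to combine. The first is a \emph{measure-theoretic Steiner monotonicity property}: if $\nu$ is an absolutely continuous rotationally invariant convex measure on $\R^{nm}$ (i.e.\ with $\gamma$-concave density for some $\gamma\geq -1/(nm)$), then for every hyperplane $\widetilde H\subset\R^{nm}$ through the origin and every convex body $A\subset\R^{nm}$ one has $\nu(A)\leq \nu(S_{\widetilde H}A)$. This is a fiberwise Borell--Brascamp--Lieb argument combined with the observation that rotational invariance forces the density to be radially decreasing along each line perpendicular to $\widetilde H$; the sharp dimensional constraint $\gamma\geq -1/(nm)$ is exactly what guarantees the concavity of one-dimensional marginals of $\nu$ needed for the Brunn--Minkowski type comparison on each fiber.

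The second ingredient is a Steiner-symmetrization inclusion for the $(L_p,Q)$-polar projection body, which I would extract from the proof of Theorem~\ref{t:Lp-Q_Petty} in \cite{HLPRY23_2}: for each hyperplane $H\subset\R^n$ through the origin there is a naturally lifted hyperplane $\widetilde H\subset\R^{nm}$ for which
$$
S_{\widetilde H}\bigl(\pp_{Q,p}(K)\bigr)\;\subseteq\;\pp_{Q,p}\bigl(S_H K\bigr).
$$
This inclusion is already the engine behind the volume inequality of Theorem~\ref{t:Lp-Q_Petty}. The ``new observation from the $L_p$ setting'' alluded to in the introduction is, I believe, precisely the verification that the symmetrization appearing in the $L_p$-proof of \cite{HLPRY23_2} can be realized as an honest Steiner symmetrization of $\R^{nm}$ with respect to a hyperplane through the origin, and not merely as a volume-preserving reshuffling adapted to Lebesgue measure --- this is what I expect to be the main obstacle, since the first ingredient only applies under that stronger structural property.

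Once both ingredients are in place the proof is a routine iteration. Pick a sequence of hyperplanes $H_1,H_2,\dots\subset\R^n$ such that the composition $S_{H_j}\cdots S_{H_1}K$ converges in Hausdorff distance to $B_K$. One step of the iteration reads
$$
\nu\bigl(\pp_{Q,p}(K)\bigr)\;\leq\;\nu\bigl(S_{\widetilde H}\,\pp_{Q,p}(K)\bigr)\;\leq\;\nu\bigl(\pp_{Q,p}(S_H K)\bigr),
$$
where the first inequality is the measure-theoretic Steiner monotonicity and the second is the inclusion above combined with monotonicity of $\nu$ under set inclusion. Iterating and passing to the limit, using continuity of $\pp_{Q,p}$ with respect to Hausdorff convergence and continuity of $\nu$ on convex bodies (guaranteed by absolute continuity of its density), gives $\nu(\pp_{Q,p}(K))\leq \nu(\pp_{Q,p}(B_K))$, as required.
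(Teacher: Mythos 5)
Your overall template (symmetrize, compare measures, iterate, pass to the limit) is reasonable, but the load-bearing step --- your ``second ingredient'' --- is exactly the part you leave unproved, and it is not available in the form you state. For $m\geq 2$ there is no natural ``lifted hyperplane'' $\widetilde H\subset\R^{nm}$: the symmetry of $\pp_{Q,p}$ induced by the reflection $R_u$ of $\R^n$ is the diagonal map $\bar x=(x_1,\dots,x_m)\mapsto (R_ux_1,\dots,R_ux_m)$, whose fixed set is $(u^\perp)^m$, a subspace of codimension $m$, not $1$. What the symmetrization proof of Theorem \ref{t:Lp-Q_Petty} actually yields is a \emph{fiberwise} statement over this codimension-$m$ subspace, of the shape $\bigl(\pp_{Q,p}(S_uK)\bigr)_{\bar y}\supseteq \tfrac12\bigl(\pp_{Q,p}K\bigr)_{\bar y}+\tfrac12\bigl(\pp_{Q,p}(R_uK)\bigr)_{\bar y}$ for $\bar y\in(u^\perp)^m$, followed by Brunn--Minkowski on the $m$-dimensional fibers; this fiber operation increases (rather than preserves) fiber volume and cannot be recast as an honest Steiner symmetrization of $\pp_{Q,p}K$ about a hyperplane through the origin. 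Consequently your ``first ingredient'' is also the wrong tool in this setting: monotonicity of $\nu$ under hyperplane Steiner symmetrization reduces to unimodality of the density along one-dimensional fibers, whereas here one must compare the $\nu$-mass of an $m$-dimensional fiber with that of a Minkowski average of two fibers, and this is precisely where the convexity hypothesis on $\nu$ (density $\gamma$-concave, $\gamma\geq-1/nm$) and a Borell--Brascamp--Lieb-type argument must enter. Your guess about the ``new observation from the $L_p$ setting'' is also off: it is not a structural reinterpretation of the symmetrization in \cite{HLPRY23_2}, but Lemma \ref{p:Lp-mixed_volumes_convexity}, the convexity of $t\mapsto V_p\bigl(K(t),L(t)\bigr)$ along linear parameter systems when $|K(t)|$ is constant.

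For comparison, the paper avoids any set inclusion for $\pp_{Q,p}$ altogether. Using the shadow system \eqref{e:shadow_system} it writes, for fixed $u\in\s^{n-1}$,
$$
\nu\Bigl(\pp_{Q,p}\bigl(K_u(t)\bigr)\Bigr)=\int_{(u^\perp)^m}F_{\bar y}(t)\,\dlat\bar y,\qquad F_{\bar y}(t)=\int_{\R^m}\chi_{_{\{f_p(t,\bar s)\leq1\}}}\,\phi_{\bar y}(\bar s)\,\dlat\bar s,
$$
with $f_p(t,\bar s)=nV_p\bigl(K_u(t),(y_i+s_iu)_i.Q^t\bigr)$, proves via Lemma \ref{p:Lp-mixed_volumes_convexity} that $f_p$ is jointly convex and even under $(t,\bar s)\mapsto(-t,-\bar s)$, and then applies Corollary \ref{c:marginals} together with the rotational invariance of $\nu$ to conclude that $F_{\bar y}$ is even and $\alpha$-concave, hence maximal at $t=0$; this gives $\nu(\pp_{Q,p}K)\leq\nu(\pp_{Q,p}(S_uK))$ directly, and the iteration-plus-continuity step you describe then finishes the proof as in your last paragraph. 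If you want to salvage your two-ingredient scheme, you would have to replace the hyperplane symmetrization by the codimension-$m$ fiber symmetrization above and reprove the measure monotonicity for it using the concavity of the density on $m$-dimensional fibers --- at which point you have essentially reconstructed the paper's argument.
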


Secondly, motivated by the aforementioned empirical interpretation of the Petty projection inequality, we shall study empirical versions of the previous result. To briefly introduce the setting  (note that we follow the notation of \cite{PP12}), for vectors $x_1,\dots,x_N \in \mathbb{R}^n$ (with $N\geq 1$) and a convex body $C \subset \mathbb{R}^N$, we denote by $[x_1,\dots,x_N]$ the $n\times N$ matrix with columns $x_i$, so that
$$
[x_1,...,x_N]C = \left\{c^1 x_1 + \cdot \cdot \cdot + c^N x_N : c = (c^1,...,c^N)\in C\right\}.
$$
Accordingly, if $X_1,\dots,X_N$ are random vectors in $\mathbb{R}^n$, then $[X_1,\dots,X_N]C$ is a random set. The empirical version of \eqref{e:classical_petty} obtained in \cite{PPT} takes the following form.
\begin{thm}[\cite{PPT}]\label{t:empirical-Petty}
    Let $C\in\R^N$ be a convex body. If $\{X_i\}_{i=1}^N$ are independent random vectors, respectively distributed according to the densities $\{f_i\}_{i=1}^N$, then, for 
    any absolutely continuous rotationally invariant, log-concave measure $\nu$ in $\R^{n}$ 
    $$
    \E\left[\nu\Bigl(\pp\bigl([X_1,...,X_N]C\bigr)\Bigl)\right]\leq  \E\left[\nu\Bigl(\pp\bigl([X^*_1,...,X^*_N]C\bigr)\Bigr)\right],
    $$
    where $\{X_i^*\}_{i=1}^N$are independent random vectors, respectively distributed according to the symmetric decreasing rearrangement of $\{f_i\}_{i=1}^N$.
\end{thm}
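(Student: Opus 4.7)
The plan is to use the rotational invariance of $\nu$ to rewrite $\nu(\pp(\cdot))$ as a spherical average, reducing the stochastic inequality to a rearrangement-of-densities inequality, which is then handled by an iterated Steiner-symmetrization argument in the spirit of Brascamp--Lieb--Luttinger (BLL).

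First, writing the density of $\nu$ as $\psi(|\cdot|)$ with $\psi$ log-concave, polar coordinates give, for any convex body $L\subset\R^n$ containing the origin in its interior,
\[
\nu(\pp(L))=\int_{\s^{n-1}}\mathrm{F}\!\left(|P_{u^\perp}L|^{-1}\right)du,\qquad \mathrm{F}(t):=\int_0^t\psi(r)r^{n-1}dr,
\]
where $P_{u^\perp}$ denotes orthogonal projection onto $u^\perp$. Applied to $L=[X_1,\dots,X_N]C$, the linearity of projection yields $P_{u^\perp}L=[P_{u^\perp}X_1,\dots,P_{u^\perp}X_N]C$, so for each fixed $u$ the quantity $|P_{u^\perp}L|^{-1}$ depends on each $X_i$ only through $P_{u^\perp}X_i$. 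Taking expectation and using Fubini to integrate out the $u$-component of each $X_i$ produces
\[
\E\bigl[\nu(\pp([X_1,\dots,X_N]C))\bigr]=\int_{\s^{n-1}}\int_{(u^\perp)^N}\mathrm{F}\bigl(|[y_1,\dots,y_N]C|^{-1}\bigr)\prod_{i=1}^N g_i^u(y_i)\,dy\,du,
\]
where $g_i^u(y):=\int_\R f_i(y+tu)\,dt$ is the marginal of $f_i$ on $u^\perp$.

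Second, I would progressively replace each $f_i$ by its symmetric decreasing rearrangement through iterated Steiner symmetrizations along a dense family of directions $u_0\in\s^{n-1}$. A single Steiner symmetrization of $f_i$ in direction $u_0$ preserves the marginal $g_i^{u_0}$, hence leaves the contribution to the outer integral from $u=u_0$ untouched, while for $u\ne u_0$ the marginal $g_i^u$ is modified. The crux is showing that the inner integral is non-decreasing under this modification. Following the PPT strategy, a layer-cake decomposition represents $\mathrm{F}(|[y_1,\dots,y_N]C|^{-1})$ as a superposition of indicators of its superlevel sets; combined with the log-concavity of $\nu$ (which gives $\mathrm{F}$ the right monotonicity) and the homogeneity/convex structure of the map $(y_1,\dots,y_N)\mapsto|[y_1,\dots,y_N]C|$, a fiber-wise Riesz/BLL rearrangement inequality in direction $u_0$ yields the required monotonicity. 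Iterating along a dense sequence of directions and invoking the BLL convergence theorem produces $f_i^*$ in the limit, and upper semicontinuity of the functional passes the inequality.

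The main obstacle is the nonproduct nature of the integrand $\mathrm{F}(|[y_1,\dots,y_N]C|^{-1})$ as a function of $(y_1,\dots,y_N)$, which blocks a direct application of Brascamp--Lieb--Luttinger. The resolution lies in the layer-cake representation together with the specific convex structure of the projection functional $|[y_1,\dots,y_N]C|$, which makes the superlevel sets amenable to rearrangement; log-concavity of $\nu$ ensures the layer-cake decomposition is compatible with the monotonicity direction required by the inequality.
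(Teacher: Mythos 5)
Your polar-coordinate reduction and the Fubini step are fine, but the heart of your argument is a genuine gap: the claim that a single Steiner symmetrization of the $f_i$ along a direction $u_0$ does not decrease, for each fixed $u\neq u_0$, the inner integral $\int_{(u^\perp)^N}\mathrm{F}\bigl(|[y_1,\dots,y_N]C|^{-1}\bigr)\prod_{i}g_i^u(y_i)\,\dlat\bar y$. Once you have integrated out the fibers parallel to $u$ to form the marginals $g_i^u$, there is no fibration in the direction $u_0$ left to which a ``fiber-wise Riesz/BLL inequality'' could be applied: the symmetrization rearranges $f_i$ along lines parallel to $u_0\notin u^{\perp}$, and the marginal of the Steiner symmetral onto $u^{\perp}$ is neither a rearrangement of $g_i^u$ nor related to it by any monotonicity you can invoke; no such per-direction monotonicity is proved (or needed) in \cite{PPT}. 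The layer-cake remark does not repair this, because the obstruction is not the non-product form of the integrand but the incompatibility between marginalizing along $u$ and symmetrizing along $u_0$. A second, related misplacement: log-concavity of $\nu$ is not used to make your $\mathrm{F}$ monotone (that holds for any density); it is needed because, for the fiber map $\bar t\mapsto\nu\bigl(\pp([y_1+t_1u,\dots,y_N+t_Nu]C)\bigr)$, each spherical integrand $\mathrm{F}\bigl(h_{\p([y_1+t_1u,\dots,y_N+t_Nu]C)}(\theta)^{-1}\bigr)$ is even and quasi-concave in $\bar t$ (shadow-system convexity of the mixed volumes plus monotonicity of $\mathrm{F}$), but an integral of quasi-concave functions is not quasi-concave in general; the concavity class of the density, via a Borell--Brascamp--Lieb marginal argument as in Corollary \ref{c:marginals}, is what restores quasi-concavity of the full fiber map.

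The route actually taken in \cite{PPT}, and in Section \ref{s:empirical} of this paper for the more general Theorem \ref{t:empirical-Q-Petty} (of which the present statement is the case recovering the classical $\pp$), keeps the full densities and applies Christ's rearrangement inequality (Theorem \ref{t:Christ_BL}) once, to the kernel $F(x_1,\dots,x_N)=\nu\bigl(\pp([x_1,\dots,x_N]C)\bigr)$: one verifies that for every $u\in\s^{n-1}$ and $\bar y\in(u^\perp)^N$ this kernel is even and quasi-concave along the fibers $\bar t\mapsto(y_1+t_1u,\dots,y_N+t_Nu)$, using the linear-parameter-system convexity of mixed volumes together with the rotational invariance and concavity of the density of $\nu$, and then the replacement of each $f_i$ by $f_i^*$ follows in one step, with no iteration over a dense set of directions. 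Until you either prove your per-$u$ monotonicity claim (which I do not believe holds as stated) or restructure the argument along these lines, the proof is incomplete.
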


In the spirit of \cite{PPT} and building on the ideas of \cite[Section~8.2]{MilYe}, we prove the following when $p=1$. Note that $\pp_Q$ stands for $\pp_{Q,1}$.
\begin{theorem}\label{t:empirical-Q-Petty}
    Let $C\in\R^N$ be a convex body and $Q\in\R^m$ be a convex body containing the origin in its interior, where $m,N\in\N$. If $\{X_i\}_{i=1}^N$ are independent random vectors, respectively distributed according to the densities $\{f_i\}_{i=1}^N$, then, for 
    any rotationally invariant, convex measure $\nu$ in $\R^{nm}$ 
    $$
    \E\left[\nu\Bigl(\pp_{Q}\bigl([X_1,...,X_N]C\bigr)\Bigr)\right]\leq  \E\left[\nu\Bigl(\pp_{Q}\bigl([X^*_1,...,X^*_N]C\bigr)\Bigr)\right],
    $$
    where $\{X_i^*\}_{i=1}^N$ are independent random vectors, respectively distributed according to the symmetric decreasing rearrangement of $\{f_i\}_{i=1}^N$.
\end{theorem}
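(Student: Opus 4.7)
My plan is to adapt the approach of Paouris, Pivovarov and Tatarko in \cite{PPT}, combining it with the polar integration technique for rotationally invariant convex measures developed in \cite[Section~8.2]{MilYe} and the $(L_1,Q)$-framework of \cite{HLPRY23_2}. The empirical inequality will follow from a Rogers--Brascamp--Lieb--Luttinger (R-B-L-L) rearrangement inequality applied fiberwise to a polar decomposition of $\nu$.

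The first step is the polar decomposition of $\nu$: writing its density as $\psi(|\cdot|)$, for any star body $L \subset \R^{nm}$ one has
\begin{equation*}
\nu(L) = \int_{S^{nm-1}} \Psi\bigl(\rho_L(\theta)\bigr)\, d\sigma(\theta), \qquad \Psi(t) := \int_0^t \psi(r)\, r^{nm-1}\, dr,
\end{equation*}
with $\Psi$ non-decreasing. Applying this to $L = \pp_Q([X_1,\dots,X_N]C)$ (whose radial function at $\theta$ equals $\|\theta\|_{\pp_Q([X_1,\dots,X_N]C)}^{-1}$) and then using Fubini, I reduce the theorem to showing that for $\sigma$-a.e.\ $\theta \in S^{nm-1}$,
\begin{equation*}
\E\bigl[\varphi_\theta\bigl(\|\theta\|_{\pp_Q([X_1,\dots,X_N]C)}\bigr)\bigr] \leq \E\bigl[\varphi_\theta\bigl(\|\theta\|_{\pp_Q([X_1^*,\dots,X_N^*]C)}\bigr)\bigr],
\end{equation*}
where $\varphi_\theta(t) := \Psi(t^{-1})$ is non-increasing.

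The second step invokes the integral representation of the $(L_1, Q)$ polar projection body from \cite{HLPRY23_2}, which expresses $\|\theta\|_{\pp_Q(K)}$ (for $\theta$ identified with an $m \times n$ matrix) as a mixed-volume-type integral of $h_Q$ against the surface area measure of $K$. For $K = [x_1,\dots,x_N]C$, this can be rewritten -- via Cauchy--Kubota-type formulas or direct parametrization of the boundary of $K$ -- as an integral over (copies of) $C$ of factors $h_Q$ evaluated on linear combinations $\sum_i c^i x_i$ of the $x_i$'s. The resulting functional on $(x_1,\dots,x_N)$ is of the form required by R-B-L-L, which, upon composition with the non-increasing $\varphi_\theta$, yields the rearrangement inequality of the preceding display. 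A standard limiting argument then passes from one-directional Steiner symmetrizations to the full symmetric decreasing rearrangement.

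The main technical obstacle is implementing Step 2 concretely: identifying a precise integral representation of $\|\theta\|_{\pp_Q([x_1,\dots,x_N]C)}$ that is directly compatible with R-B-L-L. For the classical case ($m=1$, $Q=[-1,1]$), this reduces to $\|\theta\|_{\pp(K)} = \tfrac{1}{2}|P_{\theta^\perp}K|$, for which the rearrangement input is transparent; for general $Q$ and $\theta \in \R^{nm}$, additional bookkeeping is needed but no fundamentally new ideas should be required. The restriction to $p=1$ reflects the fact that only monotonicity of $\Psi$ is used -- which is all that is available when $\nu$ is merely convex; a potential extension to $p>1$ would presumably require also exploiting log-concavity-type properties of $\Psi$, in the spirit of \cite{PPT}.
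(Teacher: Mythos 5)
Your toolkit (the representation $\|\theta\|_{\pp_{Q}K}=nV(K[n-1],\theta.Q^t)$, convexity of mixed volumes along linear parameter systems, a Rogers--Brascamp--Lieb--Luttinger theorem, and a limiting argument over Steiner symmetrizations) is the right one, but the architecture has a genuine gap: the reduction, via polar integration, to a fixed direction $\theta\in\s^{nm-1}$. Two things go wrong there. First, the kernel you would feed into Theorem~\ref{t:Christ_BL}, namely $(x_1,\dots,x_N)\mapsto\varphi_\theta\bigl(nV([x_1,\dots,x_N]C[n-1],\theta.Q^t)\bigr)$, is indeed quasi-concave along lines $x_i=y_i+t_iu$ (convexity of mixed volumes along linear parameter systems plus $\varphi_\theta$ non-increasing), but it is \emph{not even}: replacing $\bar t$ by $-\bar t$ reflects the body, $C_{\bar y}(-\bar t)=R_uC_{\bar y}(\bar t)$, so the mixed volume becomes $nV\bigl(C_{\bar y}(\bar t)[n-1],(R_u\theta).Q^t\bigr)$, and $(R_u\theta).Q^t\neq\theta.Q^t$ for a general $u$; evenness is an essential hypothesis in Theorem~\ref{t:Christ_BL}. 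Second, and worse, the fixed-$\theta$ statement you reduce to is false in general: take $m=1$, $Q=[-1,1]$, $\theta=e_1$, and let the $f_i$ be uniform densities on a long thin box aligned with $e_1$ (of fixed volume). Then $\|\theta\|_{\pp([X_1,\dots,X_N]C)}=2\,|P_{\theta^\perp}[X_1,\dots,X_N]C|_{n-1}$ is almost surely tiny, while the same quantity for the rearranged (radial) densities stays bounded away from zero, so the non-increasing $\varphi_\theta$ makes your displayed per-$\theta$ inequality go the wrong way for this $\theta$ and a neighbourhood of it. The inequality only becomes true after integrating over the sphere; the spherical variable cannot be decoupled from the Steiner direction $u$.

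A symptom of the same problem is that your argument never uses the hypothesis that $\nu$ is a \emph{convex} measure, only its rotational invariance and the monotonicity of $\Psi$; that hypothesis is exactly what repairs the failure of evenness in the paper's proof. There, for each Steiner direction $u$ one does not fix $\theta$ but decomposes the matrix variable as $\bar x=(w_i+s_iu)_i$ with $\bar w\in(u^\perp)^m$, $\bar s\in\R^m$, keeping the $\bar s$-integration against the density inside: the function $f(\bar t,\bar s)=nV\bigl(C_{\bar y}(\bar t)[n-1],(w_i+s_iu)_i.Q^t\bigr)$ is jointly convex and even under $(\bar t,\bar s)\mapsto(-\bar t,-\bar s)$, so multiplying the indicator of $\{f\le1\}$ by the $(-1/nm)$-concave density and integrating out $\bar s$ (Corollary~\ref{c:marginals}, i.e.\ Borell--Brascamp--Lieb) produces, for each fixed $\bar w$, a kernel $F_{\bar w,\bar y}(\bar t)$ that is $\alpha$-concave \emph{and} even (evenness now comes from the rotational invariance of $\nu$); only then is Theorem~\ref{t:Christ_BL} invoked and the Steiner limit taken. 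If you insist on a polar-type decomposition you must at least pair $\theta$ with $R_u\theta$ to restore evenness, but averaging the two kernels destroys quasi-concavity, and recovering it is precisely where the concavity of the density, not just the monotonicity of $\Psi$, enters. Incidentally, the restriction to $p=1$ in the empirical theorem is not about $\Psi$: it stems from the fact that $\{C_{\bar y}\bigl((1-\lambda)\bar t+\lambda\bar t'\bigr)\}_{\lambda}$ does not have constant volume, which is what Lemma~\ref{p:Lp-mixed_volumes_convexity} would require to handle $L_p$-mixed volumes.
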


Finally, we would like to remark that, as a byproduct of the tools developed to prove Theorem \ref{t:Q-Petty}, we recover a recent result by  Cao, Wang and Wang \cite{CWW} concerning  the $L_p$-analogue of the surface area measure (see Corollary \ref{c:CaoWangWang}).

The paper is organized as follows. In Section \ref{s:background} we collect some preliminaries, background material and several tools the we shall use later on. Section \ref{s:L_p} is devoted to the proof of Theorem \ref{t:Q-Petty} and some remarks related with $L_p$ mixed volumes. Finally, the proof of Theorem \ref{t:empirical-Q-Petty} is provided in Section \ref{s:empirical} 

\section{Background material}\label{s:background}
We shall work with the $n$-dimensional Euclidean space $\R^n$ endowed with their standard inner product $\esc{\cdot,\cdot}$, and  $x^{i}$ are used for the $i$-th coordinate of a vector in such  space. Given any set $M \subset \R^n$, we  use $\chi_{_{M}}$ to denote its characteristic function. For any given unit direction $u\in\s^{n-1}$, we shall use  $u^\perp$, $P_{u^\perp}$ and $R^u$ for a hyperplane with normal vector $u$, the orthogonal projection onto it, and the reflection map about it. Throughout the paper, the $k$-dimensional Lebesgue measure of a measurable set is denoted by $|\cdot|_k$ and we will omit the index $k$ when it is equal to the dimension of the ambient space; furthermore, as usual,  integrating $\dlat x$ will stand for integration with respect to the Lebesgue measure. 

Let $K_1,...,K_m\subset \R^n$ be convex bodies and $\lambda_1,...,\lambda_m \geq 0$. The volume of their Minkowski sum is given by
$$
|\lambda_1K_1 + \cdot \cdot \cdot+\lambda_mK_m| = \sum_{1\leq i_1,....,i_n\leq m}\lambda_{i_1}\cdot\cdot\cdot \lambda_{i_n}V(K_{i_1},...,K_{i_n}),
$$
where the coefficient $V(K_{i_1},...,K_{i_n})$ is the so-called mixed volume of $n$-tuple $(K_{i_1},...,K_{i_n})$ (see \cite[Section~5]{Sch2} for background and properties). We shall use the standard abbreviation $V(K[n-k],L[k])$ when $K$ and $L$ appear, respectively, $n-k$ and $k$ times in the mixed volume with $k\in\{1,...,n-1\}$.

Let $K\subset\R^n$ be a convex body, its support function $h_K$ and polar body $K^\circ$ are given by
$$
h_K(x) = \max_{y\in K}\esc{x,y} \quad \mathrm{and}\quad K^\circ=\{x \in \R^n : h_K(x)\leq 1\}.
$$
If $K$ contains the origin in its interior, its Minkowski functional is defined as $\|x\|_{K} = \inf \{\lambda > 0 : x \in \lambda K\}$. Note that, in this case, $K = \{x \in \R^n : \|x\|_K \leq 1\}$ and $\|\cdot\|_{K^\circ} = h_K(\cdot)$. We shall also recall some definitions; the projection body, $\p K$, of $K$ is the centrally symmetric convex body whose support function is given by $h_{\p K}(u) = |P_{u^\perp}K|_{n-1} = nV(K[n-1],[0,u])$ where, for any given direction $u \in \s^{n-1}$, $[0,u]$ is a segment joining the origin and $u$. Thus the polar projection body, denoted as $\pp K$, is merely $(\p K)^\circ$. 

\subsection{The $(L_p,Q)$ setting}

Let $K,L\subset \R^n$ be convex bodies containing the origin in their interior. The $p$-sum of $K$ and $L$ (introduced by Firey \cite{Firey62}), $K+_p L$, is defined via its support function as
\begin{equation}\label{e:p-sum}
  h_{K+_pL}^p(u) = h^p_K(u) + h^p_L(u),  
\end{equation}
with $u  \in \s^{n-1}$ (see also \cite{LYZ12-Lp} for a more general pointwise definition). The $L_p$ mixed volume, introduced by Lutwak \cite{LE93,LE96}, is  defined as
\begin{equation}\label{e:L_p-mixed_volume}
V_p(K,L) =\frac{p}{n} \lim_{\varepsilon\to 0^+}\frac{|K+_p\varepsilon\cdot L| - |K|}{\varepsilon}.
\end{equation}

In order to introduce the $(L_p,Q)$ polar projection body, it shall be convenient to identify $\R^n$ with the collection of $n\times 1$ column vectors, $\R^m$ with the collection of $1\times m$ row vectors, and $\R^{nm}$ with the collection of $n\times m$ matrices. Of course, this considerations are not needed when the matrix multiplication is not involved.  Fix $k,n,m\in\N$. Given $A\subset \R^{nk}, B\subset \R^{km}$, we have that $A^t=\{x^t:x\in A\}$ and $A.B=\{x.y:x\in A,y\in B\}\subset \R^{nm}$. Here, $x.y$ means the left-to-right matrix multiplication of $x$ and $y$, and $x^t$ means the transpose of the matrix $x$. Given  convex bodies $Q$ and $K$ in $\R^m$ and $\R^n$, respectively, both containing the origin in their interior. The $(L_p,Q)$ polar projection body, introduced in \cite{HLPRY23_2} for $p\geq 1$, is defined via its Minkowski functional as 
\begin{equation}\label{e:minkowski_functional_(L_p,Q)}
\| x\|_{\Pi^{\circ}_{Q,p} K}^p=nV_{p}(K, x.Q^t),
\end{equation}
where $x$ is a non-zero matrix which we identify as a vector in $\R^{nm}$. Note that the classical polar projection body is recovered by taking $p=1$ and $Q=[0,1]$. 

We would also like to remark that \eqref{e:minkowski_functional_(L_p,Q)} encapsulates several generalizations of the polar projection body operator. For example, when $Q = -\Delta_m$, where $\Delta_m$ is the $m$-dimensional orthogonal simplex, the recently introduced $m$th-order case arises \cite{HLPRY23}. If $Q = [-1,1]$ and $p \geq 1$, the $L_p$ polar projection body, originally defined by Lutwak, Yang, and Zhang \cite{LYZ00}, is recovered. Furthermore, for $p > 1$, Ludwig's \emph{asymmetric} $L_p$ polar projection bodies \cite{ML05} correspond to the cases $Q = [0,1]$ and $Q = [-1,0]$.
\subsection{Linear parameter systems}

We shall also recall the notion of a \emph{linear parameter system} introduced by Rogers and Shepard in \cite{RS58:2}  (see also \cite[Section~10.4]{Sch2}). Let $K \subset \R^n$ be a convex body, and $u\in\s^{n-1}$ be a unit direction. A linear parameter system is a family of convex bodies $\{K(t)\}_{t\in I}$, where $I\subset\R$ is an interval, that can be represented as
\begin{equation}\label{e:linear_parameter_system}
    K(t) = \conv\{x_j + \lambda_j t u : j \in \mathcal{J}\}.
\end{equation}
Here $\{x_j\}_{j\in\mathcal{J}}$ and $\{\lambda_j\}_{j\in \mathcal{J}}$ are bounded sets in $\R^n$ and $\R$ respectively, and $\mathcal{J}$ is an arbitrary index set. We shall also make use of the case in which the index set is a convex body $K$, i.e,
$$
K(t) = \text{conv}\{ x + t \alpha(x)u : x\in K\},
$$
where $\alpha : K\to\R$ is a bounded function. A fundamental property of this construction is its convexity under mixed volumes: an elegant proof can be found in \cite[Theorem~10.4.1]{Sch2}.
\begin{theorem}\label{t:convexity_Mixed_Volumes)}
    Let $\{K_i(t)\}_{t\in I}$, with $i = 1,...,n$, be linear parameter systems in the direction $u\in \s^{n-1}$. Then $t \mapsto V\bigl(K_1(t),...,\,K_n(t)\bigr)$ is convex.
\end{theorem}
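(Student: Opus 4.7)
The approach rests on two easy observations about a single linear parameter system $K(t) = \conv\{x_j + \lambda_j t u : j \in \mathcal{J}\}$: its projection $P := P_{u^\perp} K(t) = \conv\{P_{u^\perp} x_j : j \in \mathcal{J}\}$ is independent of $t$, and for every $y \in P$ the height functions $f_t^+(y) := \max\{z \in \R : y + zu \in K(t)\}$ and $f_t^-(y) := \min\{z \in \R : y + zu \in K(t)\}$ are, respectively, convex and concave in $t$. Indeed, $f_t^\pm(y)$ is the optimum of $\sum_j \alpha_j(\langle x_j, u\rangle + \lambda_j t)$ over the non-empty polytope $\{\alpha \geq 0 : \sum_j \alpha_j = 1,\ \sum_j \alpha_j P_{u^\perp} x_j = y\}$, i.e., a maximum/minimum of affine functions of $t$. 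Hence the chord length $\ell_t(y) = f_t^+(y) - f_t^-(y)$ is convex in $t$ for every $y \in P$, and Fubini yields convexity of $|K(t)| = \int_P \ell_t(y)\, dy$.

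Next, since positive Minkowski combinations of linear parameter systems in direction $u$ remain linear parameter systems in direction $u$---$\sum_i s_i K_i(t)$ has index set $\prod_i \mathcal{J}_i$, points $\sum_i s_i x_{j_i}^{(i)}$, and speeds $\sum_i s_i \lambda_{j_i}^{(i)}$---the first paragraph upgrades to: for every $s \in [0,\infty)^n$, $t \mapsto \bigl|\sum_i s_i K_i(t)\bigr|$ is convex.

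The pure mixed volume $V(K_1(t), \ldots, K_n(t))$ is the coefficient of $s_1 s_2 \cdots s_n$ (times $n!$) in this polynomial in $s$. The main obstacle is extracting convexity of this specific coefficient: the polarization formula $V(K_1, \ldots, K_n) = \frac{1}{n!} \sum_{I \subseteq [n]} (-1)^{n-|I|} |K_I|$ is a signed combination of the convex functions $t \mapsto |K_I(t)|$, so convexity does not transfer termwise. I would address this by induction on the number of varying bodies, with base case $V(K_1(t), C_2, \ldots, C_n) = \frac{1}{n} \int h_{K_1(t)}(v)\, dS(C_2, \ldots, C_n; v)$ convex in $t$ (since $h_{K_1(t)}(v) = \max_j(\langle x_j, v\rangle + \lambda_j t \langle u, v\rangle)$ is convex in $t$ for each $v$ and the surface area measure does not depend on $t$). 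The inductive step requires propagating convexity through the sequence of bodies while carefully controlling the $t$-dependence of the mixed surface area measure; this delicate combinatorial argument is precisely the Rogers--Shephard construction executed in \cite[Theorem~10.4.1]{Sch2}.
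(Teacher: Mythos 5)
Your first two steps are correct, but they only establish Corollary \ref{c:shepard_convexity} (Rogers--Shephard's convexity of $t\mapsto|K(t)|$, applied to the Minkowski combinations $\sum_i s_iK_i(t)$), and your base case covers only the situation in which a single body varies. The heart of Theorem \ref{t:convexity_Mixed_Volumes)} --- all $n$ bodies varying simultaneously --- is precisely the step you do not supply. The obstacle you flag is real: knowing that $t\mapsto\bigl|\sum_i s_iK_i(t)\bigr|$ is convex for every fixed $s\in[0,\infty)^n$, with nonnegative coefficients, does not formally give convexity of the multilinear coefficient. For instance, $p_t(s_1,s_2)=t^2(s_1^2+s_2^2)+(1-t^2)s_1s_2$ is convex in $t\in[-1,1]$ for every $s_1,s_2\ge0$ and has nonnegative coefficients there, yet the coefficient of $s_1s_2$ is concave in $t$; so genuinely geometric input beyond your first two paragraphs is indispensable. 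The ``induction on the number of varying bodies'' is only announced, not carried out, and it is not clear it can close as stated: as soon as a second body varies, in the representation $V\bigl(K_1(t),K_2(t),C_3,\dots\bigr)=\frac1n\int h_{K_1(t)}(v)\,\dlat S\bigl(K_2(t),C_3,\dots;v\bigr)$ the mixed area measure moves with $t$, and you offer no convexity or monotonicity control of that dependence. Ending with ``this is precisely \cite[Theorem~10.4.1]{Sch2}'' defers the entire content of the theorem to the reference, so as a standalone argument the proposal has a genuine gap at its central step.

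For calibration: the paper itself gives no proof of this statement either --- it records it and points to \cite[Theorem~10.4.1]{Sch2} --- so your citation lands in the same place; but note that the argument there is not an induction of the kind you sketch. Roughly, each linear parameter system is realized as $K_i(t)=\pi_t\widetilde K_i$ for a \emph{fixed} convex body $\widetilde K_i\subset\R^{n+1}$ (take $\widetilde K_i=\cl\conv\{(x_j,\lambda_j)\}$), where $\pi_t(x,\xi)=x+t\xi u$ has kernel spanned by $e_{n+1}-tu$. One then checks that $V\bigl(K_1(t),\dots,K_n(t)\bigr)=(n+1)\,V_{n+1}\bigl(\widetilde K_1,\dots,\widetilde K_n,[0,e_{n+1}-tu]\bigr)$ (the distortion factors $\sqrt{1+t^2}$ cancel), which equals $\int \max\{0,\langle e_{n+1}-tu,v\rangle\}\,\dlat S(\widetilde K_1,\dots,\widetilde K_n;v)$: an integral of a function convex in $t$ against a measure that does not depend on $t$. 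In effect this is your base-case mechanism, but made to apply to all $n$ varying bodies at once via the lift to $\R^{n+1}$; supplying that lift (or some other complete argument) is what is missing from your proposal.
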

As a corollary one obtains a classical result proved by Rogers and Shepard \cite{RS58:1}.
\begin{corollary}\label{c:shepard_convexity}
    Let $\{K(t)\}_{t\in I }$ be a linear parameter system. Then $t\mapsto|K(t)|$ is convex.
\end{corollary}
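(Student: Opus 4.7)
The plan is to simply specialize Theorem \ref{t:convexity_Mixed_Volumes)} to the diagonal. Recall that for a convex body $L\subset\R^n$, the standard identity $|L| = V(L,\dots,L)$ (with $L$ appearing $n$ times) expresses the volume as a mixed volume; this follows directly from the polynomial expansion of $|\lambda_1 K_1+\cdots+\lambda_m K_m|$ by taking $m=1$ and $K_1=L$.

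Given the linear parameter system $\{K(t)\}_{t\in I}$, I would define $K_i(t):=K(t)$ for every $i=1,\dots,n$. Since each $K_i(t)$ is the same linear parameter system in the direction $u\in\s^{n-1}$, Theorem~\ref{t:convexity_Mixed_Volumes)} applies and yields that the map
$$
t\longmapsto V\bigl(K_1(t),\dots,K_n(t)\bigr)=V\bigl(K(t),\dots,K(t)\bigr)=|K(t)|
$$
is convex, which is exactly the statement of the corollary.

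There is essentially no obstacle here, since the corollary is precisely the diagonal case of the preceding theorem. The only thing worth double-checking is that taking all entries of the mixed volume equal to the same linear parameter system is admissible in the hypothesis of Theorem~\ref{t:convexity_Mixed_Volumes)} (it is, since no independence or distinctness between the $K_i(t)$ is required), and that the representation of $|K(t)|$ as a mixed volume is the standard one recalled above.
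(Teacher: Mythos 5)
Your proof is correct and is essentially the argument the paper intends: the corollary is presented there as the diagonal case of Theorem \ref{t:convexity_Mixed_Volumes)}, using exactly the identity $|K(t)| = V\bigl(K(t),\dots,K(t)\bigr)$ as you do. Nothing further is needed.
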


Linear parameter system have demonstrated to be a powerful tool to prove isoperimetric type inequalities (and its reverse counterpart in the plane). To list some fundamental contributions in this regard, we would like to mention the work of Campi and Gronchi \cite{CampiGronchiLpBPC,CG06,CG06_2} and Campi, Colesanti and Gronchi \cite{CCG99}.

In this work, we shall mostly  use a specific linear parameter system. Let $K\subset \R^n$ be a convex body and $u \in \s^{n-1}$. Then $\{K_u(t)\}_{t\in [-1,1]}$ is the linear parameter system given by 
\begin{equation}\label{e:shadow_system}
    K_u(t)= \{y + su: y\in P_{u^\perp}K, \; s\in [f_u^t(y),-g_u^t(y)]\},
\end{equation}
with
$$
f_u^t(y) = \frac{(1+t)f_u(y) + (1-t)g_u(y)}{2}
$$
and 
$$
 g_u^t(y) = \frac{(1-t)f_u(y) + (1+t)g_u(y)}{2},
$$
where $f_u,g_u : P_{u^\perp}K \to \R$ are convex functions such that
$$
K=\{ y + su: y\in P_{u^\perp}K, \; s\in [f_u(y),-g_u(y)]\}.
$$
Note that $\{K_u(t)\}_{t\in [-1,1]}$ interpolates continuously between $K_u(1) = K$, $K_u(-1) = R^u (K)$ and $K_u(0) = S^u(K)$, where $S^u(K)$ is the \emph{Steiner} symmetral of $K$ in the direction $u$. Moreover, $K_u(-t) = R^u\bigl(K_u(t)\bigr)$ and $|K_u(t)| = |K|$ for all $t\in[-1,1]$. In addition, for every $y\in u^\perp$ and $t\in[-1,1]$
$$
[K_u(t)]_y = \frac{1+t}{2}[K]_y + \frac{1-t}{2}[R^u(K)]_y,
$$
where we use the notation $[K]_y$ for the  one-dimensional fiber $K \cap (y + \R)$. The interested reader may check \cite[Lemma~2.1]{MilYe} for further details.

A natural question is whether linear parameter systems are ``preserved'' under the Minkowski addition or, more generally, for the $p$-sum \eqref{e:p-sum} with $p\geq 1$. The following result by Bianchini and Colesanti provides an answer.
\begin{theorem}[\cite{BiaCol08}]\label{t:Bianchini_Colesanti_Lp-sum_shadow_systems}
    Let $\{K(t)\}_{t\in I}$ and $\{L(t)\}_{t\in I}$ be linear parameter systems along the direction $u\in \s^{n-1}$. Then, for every $p\geq 1$ $\{K(t) +_p L(t)\}_{t \in I}$ is also a linear parameter system along the direction $u \in \s^{n-1}$.
\end{theorem}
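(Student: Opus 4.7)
The plan is to lift both linear parameter systems to fixed convex bodies in one higher dimension and then check that $p$-addition of the liftings is compatible with the $t$-dependent projection back to $\R^n$. Given $\{K(t)\}_{t\in I}$ of the form \eqref{e:linear_parameter_system}, set
$$
\tilde K=\cl\conv\{(x_j,\lambda_j):j\in \mathcal{J}\}\subset\R^{n+1},
$$
and analogously define $\tilde L\subset\R^{n+1}$. Introduce the $t$-dependent linear surjection $\pi_t\colon\R^{n+1}\to\R^n$ given by $\pi_t(x,s)=x+stu$. Since linear maps commute with convex hulls, one immediately has $K(t)=\pi_t(\tilde K)$ and $L(t)=\pi_t(\tilde L)$ for every $t\in I$.

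A direct computation shows that the adjoint of $\pi_t$ is $\pi_t^*(v)=(v,t\esc{u,v})$, so
$$
h_{K(t)}(v)=h_{\tilde K}\bigl(\pi_t^*(v)\bigr),\qquad h_{L(t)}(v)=h_{\tilde L}\bigl(\pi_t^*(v)\bigr)
$$
for every $v\in\R^n$. Applying the defining identity \eqref{e:p-sum} of the $p$-sum both in $\R^n$ and in $\R^{n+1}$, one then obtains
$$
h_{K(t)+_pL(t)}(v)^p=h_{\tilde K}\bigl(\pi_t^*(v)\bigr)^p+h_{\tilde L}\bigl(\pi_t^*(v)\bigr)^p=h_{\tilde K+_p\tilde L}\bigl(\pi_t^*(v)\bigr)^p=h_{\pi_t(\tilde K+_p\tilde L)}(v)^p,
$$
whence $K(t)+_pL(t)=\pi_t(\tilde K+_p\tilde L)$ by the uniqueness of the support function. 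Representing the fixed body $\tilde K+_p\tilde L$ as $\cl\conv\{(y_k,\mu_k):k\in\mathcal{K}\}$ (for instance, via its extreme points), one concludes
$$
K(t)+_pL(t)=\cl\conv\{y_k+\mu_k tu:k\in\mathcal{K}\},
$$
which is exactly a linear parameter system along $u$ in the sense of \eqref{e:linear_parameter_system}.

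The main obstacle I anticipate is the technical hypothesis behind Firey's $p$-sum, which requires the origin to lie in the interior of both summands. At the lifted level this translates into $0\in\inter\tilde K\cap\inter\tilde L$, which is not automatic from the mere assumption $0\in\inter K(t)\cap\inter L(t)$. This can be addressed by normalizing the lifting (by an appropriate shift of the parameter $t$ and of the extra coordinate in $\R^{n+1}$, absorbed into the generating sets $\{x_j\}$ and $\{\lambda_j\}$) so as to force the interior condition, or alternatively by invoking the pointwise extension of the $p$-sum to arbitrary convex bodies developed in \cite{LYZ12-Lp} (cited explicitly in the paper), under which the computation above goes through verbatim without any interior hypothesis.
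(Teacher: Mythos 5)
The paper itself offers no proof of this statement (it is imported from \cite{BiaCol08}), so your argument can only be compared with the standard one. Your strategy is exactly the classical route: a linear parameter system is the family of images $\pi_t(\tilde K)$ of one fixed convex body $\tilde K\subset\R^{n+1}$ under the maps $\pi_t(x,s)=x+stu$; the adjoint formula $h_{K(t)}(v)=h_{\tilde K}(v,t\esc{u,v})$ is correct; and proving that $\{K(t)+_pL(t)\}_{t\in I}$ is again such a family amounts to exhibiting a single convex body in $\R^{n+1}$ whose support function at $(v,t\esc{u,v})$ equals $\bigl(h_{K(t)}(v)^p+h_{L(t)}(v)^p\bigr)^{1/p}$ for all $t\in I$ and $v\in\R^n$. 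This is in substance the argument of \cite{BiaCol08}, so the approach is the right one.

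The only gap is the one you flag yourself, namely the possible failure of $h_{\tilde K},h_{\tilde L}\geq 0$ on $\R^{n+1}$, without which $\tilde K+_p\tilde L$ is not defined by \eqref{e:p-sum}; but neither of your proposed remedies is adequate as stated. Shifting the lifted bodies is not harmless: $\pi_t\bigl(\tilde K+(a,b)\bigr)=K(t)+a+btu$, so a translation upstairs moves every $K(t)$ by a $t$-dependent vector, and since $+_p$ does not commute with translations for $p>1$ this changes $K(t)+_pL(t)$ in an uncontrolled way. Invoking the pointwise extension of \cite{LYZ12-Lp} does not make the computation go through verbatim either: for that extension the identity $h_{\tilde K+_p\tilde L}(w)^p=h_{\tilde K}(w)^p+h_{\tilde L}(w)^p$ holds only at directions $w$ where both support functions are nonnegative, and fails (indeed the right-hand side need not even be meaningful) elsewhere. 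Fortunately, nonnegativity at the directions $w=\pi_t^*(v)$ is all you need, and the clean repair is one line: the statement implicitly assumes $0\in K(t)\cap L(t)$ for all $t\in I$ (otherwise $K(t)+_pL(t)$ is undefined via \eqref{e:p-sum}), so replace $\tilde K$ and $\tilde L$ by $\conv(\tilde K\cup\{0\})$ and $\conv(\tilde L\cup\{0\})$. This leaves all projections unchanged, since $\pi_t\bigl(\conv(\tilde K\cup\{0\})\bigr)=\conv\bigl(K(t)\cup\{0\}\bigr)=K(t)$, while making the lifted support functions nonnegative; then $\bigl(h_{\tilde K}^p+h_{\tilde L}^p\bigr)^{1/p}$ is sublinear by Minkowski's inequality, so $\tilde K+_p\tilde L$ is a genuine convex body containing the origin, and your chain of support-function identities, hence the conclusion, is valid.
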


\subsection{Functional results}

To finish the background section, we recall some functional results and definitions that will be used later on. We say that a function $\varphi:\R^n\longrightarrow\R_{\geq0}$ is
$p$-concave, for $p\in\R\cup\{\pm\infty\}$, if
\begin{equation*}\label{e:p-concavecondition}
\varphi\bigl((1-\lambda)x+\lambda y\bigr)\geq
\bigl((1-\lambda)\varphi(x)^p+\lambda \varphi(y)^p\bigr)^{1/p}
\end{equation*}
for all $x,y\in\R^n$ such that $\varphi(x)\varphi(y)>0$ and any $\lambda\in(0,1)$. The cases $p=0$, $p=\infty$ and $p=-\infty$ follow by continuity of the $p$-means, i.e, one obtains the geometric mean, the maximum and the minimum (of $\varphi(x)$ and $\varphi(y)$), respectively. Note that if $p>0$, then $\varphi$ is $p$-concave if and only if $\varphi^p$ is concave on its support $\{x\in\R^n: \varphi(x)>0\}$. In particular, a $0$-concave function is usually called \emph{log-concave} whereas a $(-\infty)$-concave function is referred to as \emph{quasi-concave}. Moreover, Jensen's inequality ensures  that a $q$-concave function is also $p$-concave, whenever $q>p$.

Let $s\in[-\infty,1]$ and $\nu$ be a Borel measure in $\R^n$. Then $\nu$ is $s$-concave if
$$
\nu\bigl((1-\lambda) A + \lambda B\bigr)\geq \bigl((1-\lambda)\nu(A)^s + \lambda \nu(B)^s\bigr)^{1/s}.
$$
When $s = -\infty$ the measure is usually referred to as \emph{convex}. Following Borell's characterization \cite{Borell}, an absolutely continuous measure $\nu$ in $\R^n$ with density $\varphi$ is $s$-concave if and only if $\varphi$ is $p$-concave with $p=s/(1-ns)$ (note that Jensen's inequality implies that convex measures are the largest class among $s$-concave ones). The latter can be deduced from the following result, originally proved in
\cite{Borell} and \cite{BL} (see also \cite{G} for a detailed presentation).
\begin{theorem}[The Borell-Brascamp-Lieb inequality]\label{t:BBL}
Let $\lambda\in(0,1)$. Let $-1/n\leq p\leq\infty$ and let $f,g,h:\R^n\longrightarrow\R_{\geq0}$ be measurable functions, with positive integrals, such that
\begin{equation*}
h\bigl((1-\lambda)x + \lambda y\bigr)\geq \bigl((1-\lambda)f(x)^p + \lambda g(y)^p\bigr)^{1/p}
\end{equation*}
for all $x,y\in\R^n$ such that $f(x)g(y)>0$. Then
\begin{equation}\label{e:BBL}
\int_{\R^n}h(x)\,\dlat x\geq \left((1-\lambda)\left(\int_{\R^n}f(x)\,\dlat x\right)^q+\lambda\left(\int_{\R^n}g(x)\,\dlat x\right)^{q}\right)^{1/q},
\end{equation}
where $q=p/(np+1)$.
\end{theorem}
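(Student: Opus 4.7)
The plan is to adapt the strategy of \cite{PPT} for the classical empirical Petty inequality, replacing the polar projection body by $\pp_Q$ and leveraging the convex-measure / shadow-system toolkit from the proof of Theorem~\ref{t:Q-Petty} (itself based on \cite[\S 8.2]{MilYe}). By a standard approximation of the spherical symmetric decreasing rearrangement by iterated one-direction Steiner rearrangements (in the sense of Brock--Solynin / Burchard), it suffices to prove, for every fixed $u \in \s^{n-1}$, the single-step monotonicity
\[
\E\bigl[\nu\bigl(\pp_Q[X_1,\dots,X_N]C\bigr)\bigr] \leq \E\bigl[\nu\bigl(\pp_Q[X_1^{\sharp_u},\dots,X_N^{\sharp_u}]C\bigr)\bigr],
\]
where $X_i^{\sharp_u}$ is distributed according to the Steiner symmetrization $f_i^{\sharp_u}$ of $f_i$ along $u$.

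Decomposing $X_i = Y_i + t_i u$ with $Y_i \in u^\perp$ and conditioning on $Y = (Y_1,\dots,Y_N)$ via Fubini, the task reduces to showing that, for almost every $Y$, replacing the conditional densities $g_i^{Y_i}(t_i)$ of $t_i$ given $Y_i$ by their one-dimensional symmetric decreasing rearrangements does not decrease the integral of $F_Y(t) := \nu(\pp_Q K(Y,t))$, where $K(Y,t) := [Y_1 + t_1 u,\dots,Y_N + t_N u]C$. Fixing an index $k$ and the coordinates $(t_j)_{j\neq k}$, the family $K(Y,(t_1,\dots,s,\dots,t_N))$, as a function of $s$, can be written as
\[
\conv\bigl\{\textstyle\sum_j c^j Y_j + \bigl(\sum_{j\neq k} c^j t_j\bigr) u + c^k s\, u : c \in C\bigr\},
\]
a linear parameter system in $s$ along direction $u$ with speed function $c \mapsto c^k$ in the sense of \eqref{e:linear_parameter_system}. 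By Theorem~\ref{t:Bianchini_Colesanti_Lp-sum_shadow_systems} applied with $p=1$, $\{K(Y,t)+\bar x\cdot Q^t\}$ is also a linear parameter system for every fixed $\bar x \in \R^{nm}$, so Theorem~\ref{t:convexity_Mixed_Volumes} implies that the Minkowski functional
\[
s \mapsto \|\bar x\|_{\pp_Q K(Y,t)} = n\,V\bigl(K(Y,t)[n-1],\,\bar x\cdot Q^t\bigr)
\]
is convex in $s$; the same reasoning along any affine line in $\R^N$ yields joint convexity in $t \in \R^N$. Writing $\nu(\pp_Q K(Y,t))$ in spherical coordinates using the rotational invariance of $\nu$, and combining this convexity with Borell's $(-1/(nm))$-concavity characterization of the density of the convex measure $\nu$ via the Borell--Brascamp--Lieb inequality (Theorem~\ref{t:BBL}), one deduces that $t \mapsto F_Y(t)^{-1/(nm)}$ is convex on $\R^N$.

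The one-step Steiner rearrangement then follows from a polarization argument in the spirit of \cite{PPT}: polarization across the hyperplane $u^\perp$ reduces the symmetric decreasing rearrangement of each $g_i^{Y_i}$ to iterated two-point rearrangements, and the $(-1/(nm))$-concavity of $F_Y$ in each coordinate (combined with the joint multi-parameter linear-parameter-system structure of $\{K(Y,t)\}_{t\in\R^N}$) ensures that the polarization inequality for $F_Y$ holds at the level of the integrand. Iterating polarizations, integrating over $Y$, and finally iterating Steiner rearrangements over a dense set of directions $u$ completes the proof. The principal obstacle is verifying the polarization inequality for $F_Y$: one must check that a single polarization of one conditional density $g_k^{Y_k}$ across the origin does not decrease the integral. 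This is the $\pp_Q$-analogue of the ``Steiner concavity'' property established in \cite{PPT} for the classical Petty setting, and it draws decisively on the convexity of the Minkowski functional of $\pp_Q K(Y,t)$ along every line in $\R^N$ together with the $(-1/(nm))$-concavity of $F_Y$ inherited through Borell--Brascamp--Lieb.
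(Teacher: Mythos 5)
Your proposal does not address the statement it is supposed to prove. The statement is the Borell--Brascamp--Lieb inequality (Theorem~\ref{t:BBL}): a purely functional result about three measurable functions $f,g,h$ on $\R^n$ satisfying a $p$-mean hypothesis, with the conclusion that their integrals satisfy a $q$-mean inequality with $q=p/(np+1)$. What you have written instead is a proof sketch for Theorem~\ref{t:empirical-Q-Petty}, the empirical $(L_p,Q)$ Petty inequality: your entire argument is about random vectors $X_i$, the sets $[X_1,\dots,X_N]C$, shadow systems, Steiner rearrangements of densities, and polarization. None of these objects appear in the statement of Theorem~\ref{t:BBL}, and no step of your argument engages with the functions $f,g,h$ or the exponent arithmetic $q=p/(np+1)$ that the theorem asserts.

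Worse, your sketch explicitly invokes Theorem~\ref{t:BBL} as a tool (``combining this convexity with Borell's $(-1/(nm))$-concavity characterization \dots via the Borell--Brascamp--Lieb inequality (Theorem~\ref{t:BBL})''), so even if one tried to read it as a proof of the stated theorem it would be circular. A genuine proof of Theorem~\ref{t:BBL} runs along entirely different lines: one first establishes the one-dimensional case (for instance by parametrizing the level sets of $f$ and $g$ by their distribution functions, or by a monotone transport map, and using the essential-sum inequality $|\{h>t\}|\geq(1-\lambda)|\{f>t'\}|+\lambda|\{g>t''\}|$ for appropriate levels together with the arithmetic of $p$-means), and then proceeds by induction on the dimension via Fubini, tracking how the exponent transforms as $p\mapsto p/(p+1)$ at each marginal integration, which after $n$ steps produces exactly $q=p/(np+1)$. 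The paper itself does not reprove this classical result but cites Borell and Brascamp--Lieb; if you are asked to supply a proof, it is that inductive (or mass-transport) argument you need, not the shadow-system machinery of Sections~\ref{s:L_p} and~\ref{s:empirical}, which sits downstream of Theorem~\ref{t:BBL}, not upstream.
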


Another consequence of the Borell-Brascamp-Lieb inequality is the following. 
\begin{corollary}\label{c:marginals}
    Let $f:\R^n\times\R^d$ be a $p$-concave function, with $p\geq -1/n$. Then
    $$
    F(y) = \int_{\R^n}f(x,y)\,\dlat x
    $$
    is $p/(np+1)$-concave.
\end{corollary}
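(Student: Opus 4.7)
The plan is to reduce the statement to a direct application of the Borell--Brascamp--Lieb inequality (Theorem \ref{t:BBL}) by slicing in the $y$-variable. Fix $y_0,y_1\in\R^d$ with $F(y_0),F(y_1)>0$ and $\lambda\in(0,1)$, and let $y_\lambda=(1-\lambda)y_0+\lambda y_1$. Define three auxiliary functions on $\R^n$ by
$$
f_0(x)=f(x,y_0),\qquad f_1(x)=f(x,y_1),\qquad h(x)=f(x,y_\lambda).
$$
The goal is to check that the triple $(h,f_0,f_1)$ satisfies the hypothesis of Theorem \ref{t:BBL} with exponent $p$, since then \eqref{e:BBL} will read precisely
$$
F(y_\lambda)=\int_{\R^n}h(x)\,\dlat x\geq\left((1-\lambda)F(y_0)^q+\lambda F(y_1)^q\right)^{1/q}
$$
with $q=p/(np+1)$, which is the desired $q$-concavity of $F$.

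First I would verify the pointwise inequality. For any $x_0,x_1\in\R^n$ with $f_0(x_0)f_1(x_1)>0$, the joint $p$-concavity of $f$ on $\R^n\times\R^d$ applied to the pairs $(x_0,y_0)$ and $(x_1,y_1)$ at the parameter $\lambda$ gives
$$
f\bigl((1-\lambda)(x_0,y_0)+\lambda(x_1,y_1)\bigr)\geq \bigl((1-\lambda)f(x_0,y_0)^p+\lambda f(x_1,y_1)^p\bigr)^{1/p}.
$$
The left-hand side equals $h((1-\lambda)x_0+\lambda x_1)$, while the right-hand side is exactly $((1-\lambda)f_0(x_0)^p+\lambda f_1(x_1)^p)^{1/p}$. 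This is precisely the hypothesis needed.

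Next I would check the admissibility of the exponent: the assumption $p\geq -1/n$ is exactly the range required by Theorem \ref{t:BBL}, and the integrals of $f_0$ and $f_1$ are $F(y_0)$ and $F(y_1)$, both assumed positive. Thus \eqref{e:BBL} applies and yields the claim.

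There is essentially no obstacle here beyond bookkeeping; the only mildly delicate point is to rule out the degenerate case where $F(y_0)$ or $F(y_1)$ vanishes, but $q$-concavity on the support of $F$ is the standard convention (cf.\ the definition of $p$-concavity recalled above, which requires $\varphi(x)\varphi(y)>0$). When $p=0$ the same argument goes through by passing to the limit (or by directly invoking the Prékopa--Leindler form of Theorem \ref{t:BBL}), giving $q=0$ and hence log-concavity of $F$.
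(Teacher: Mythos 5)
Your proof is correct and matches the paper's intended argument: the paper presents this corollary as a direct consequence of the Borell--Brascamp--Lieb inequality, and your slicing in the $y$-variable with $h(x)=f(x,y_\lambda)$, $f_0(x)=f(x,y_0)$, $f_1(x)=f(x,y_1)$ is exactly that standard deduction. The handling of the degenerate and $p=0$ cases is also fine.
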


We conclude by recalling the notion of symmetric decreasing rearrangement.  We essentially follow \cite{Burchard} (see also \cite[Chapter~3]{LiebAnalysis}). Let $A\subset\R^n$ be a measurable set with finite volume. Its symmetric rearrangement $A^*$ is an Euclidean open ball with the same volume as $A$. Let now $f:\R^n \to \R_+$ be an integrable function. Using its \emph{layer-cake representation} 
$$
f(x) = \int_0^{+\infty} \chi_{_{\{f(x)>t\}}}(x)\,\dlat t,
$$
the \emph{symmetric decreasing rearrangement} of $f$, denoted as $f^*$, is given by
$$
f^*(x) = \int_0^{+\infty} \chi_{_{\{f(x)>t\}^*}}(x)\,\dlat t
$$
Note that $f^*$ is radially symmetric and decreasing. Moreover, it preserves the volume of the superlevel sets, i.e, $|\{f(x) > t\}| = |\{f^*(x)>t\}|$ for all $t>0$.

For any given $u\in \s^{n-1}$, the \emph{Steiner} symmetral of $f$ with respect to $u$, denoted as $f^u$ is the function given by 
$$
f^u(x) = \int_0^{+\infty} \chi_{_{\bigl\{S^u(\{f(x)>t\})\bigr\}}}(x)\,\dlat t.
$$
Equivalently, $f^u$ is obtained rearranging $f$ along every line parallel to $u$, i.e., for every $y\in u^\perp$, taking $h(t) = f(y + tu)$, we have that $f^u(y+ tu) = h^*(t)$. It is proved in \cite{BLL74} (see also \cite[Chapter~14]{BSConv11}) that for every measurable function $f:\R^n \to \R_+$ with compact support there exists a sequence of the form $f_0 = f$ and $f_{n+1} = f_{n}^u$ for some $u \in \s^{n-1}$, which converges in the $L_1(\R^n)$-norm to $f^*$.

A result involving symmetric decreasing rearrangements that will be central in this note is Christ's version \cite{Christ84} of Rogers-Brascamp-Lieb-Luttinger's inequality. As shown in \cite{PP12} (see also \cite{PP17-1,PPT}), this theorem is a powerful tool for proving empirical type isoperimetric inequalities. We state it for reader's convenience. 
\begin{theorem}[\cite{Christ84}]\label{t:Christ_BL}
    Let $f_1,...,f_N : \R^n \to  \R_+$ be integrable functions and let $F : ( \R^n)^N \to \R_+$. Suppose that $F$ satisfies that, for any $u \in \s^{n-1}$ and $ y =(y_1,...,y_N) \in (u^\perp)^N$, the function $F_{u,y}:\R^N \to \R_+$ defined by $F_{u,y}(t_1,...,t_N) = F(y_1 + t_1 u, ...,y_N+t_N u)$ is even and quasi-concave. Then 
    \begin{equation}\label{e:Christ_BL}
         \int_{(\R^n)^N}F(x_1,...,x_N)\prod_{i=1}^N f_i(x_i) \, \dlat  x \leq \int_{(\R^n)^N}F(x_1,...,x_N)\prod_{i=1}^N f^*_i(x_i) \, \dlat  x,
    \end{equation}
     where $\dlat  x$ stands for $\dlat x_1\cdots\dlat x_N$. Indeed, under the assumptions of Theorem \ref{t:Christ_BL} one can check that
\begin{align}\label{e:BBL_Steiner}
    \int_{(\R^n)^N}F(x_1,...,x_N)\prod_{i=1}^N f_i(x_i) \, \dlat x \leq \int_{(\R^n)^N}F(x_1,...,x_N)\prod_{i=1}^N f_i^u(x_i) \, \dlat x,
\end{align}
 which implies \eqref{e:Christ_BL} after a sequence of Steiner symmetrizations with respect to suitable directions. We refer the reader to \cite[Proposition~3.2]{PP12} for a detailed exposition of the latter.
\end{theorem}

\section{On the $(L_p,Q)$ Petty's projection inequality}\label{s:L_p}

We start this section by proving a simple observation regarding the interplay between $L_p$-mixed volumes \eqref{e:L_p-mixed_volume} and linear parameter systems.
\begin{lemma}\label{p:Lp-mixed_volumes_convexity}
     Let $\{K(t)\}_{t\in I}$ and $\{L(t)\}_{t\in I}$ be linear parameter systems, both along the direction $u\in \s^{n-1}$ and containing the origin for all $t\in I$, such that $|K(t)|$ does not depend on $t$. Then $t \mapsto V_p\bigl(K(t),L(t)\bigr)$ is convex for every $p\geq 1$.
\end{lemma}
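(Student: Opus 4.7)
The plan is to pass the limit in the definition \eqref{e:L_p-mixed_volume} through the convexity statement coming from Corollary \ref{c:shepard_convexity}. More precisely, for every fixed $\varepsilon>0$ I would show that the map
$$
t\mapsto \bigl|K(t)+_p \varepsilon\cdot L(t)\bigr|
$$
is convex on $I$, and then deduce convexity of $t\mapsto V_p(K(t),L(t))$ by taking the pointwise limit as $\varepsilon\to 0^+$ of the corresponding difference quotient.

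To carry this out, I first observe that scaling a linear parameter system by a positive constant preserves its structure: if $L(t)=\conv\{y_j+\mu_j t u:j\in\mathcal J\}$, then for $p\geq 1$ one has $\varepsilon\cdot L(t)=\varepsilon^{1/p}L(t)=\conv\{\varepsilon^{1/p}y_j+\varepsilon^{1/p}\mu_j t u:j\in\mathcal J\}$, which again fits the form \eqref{e:linear_parameter_system} along the same direction $u$. Theorem \ref{t:Bianchini_Colesanti_Lp-sum_shadow_systems} then applies to $\{K(t)\}_{t\in I}$ and $\{\varepsilon\cdot L(t)\}_{t\in I}$, so that $\{K(t)+_p \varepsilon\cdot L(t)\}_{t\in I}$ is a linear parameter system along $u$. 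Corollary \ref{c:shepard_convexity} now yields convexity of the volume in $t$ for every fixed $\varepsilon>0$.

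Since by hypothesis $|K(t)|$ is independent of $t$, subtracting it preserves convexity, and dividing by $\varepsilon>0$ does as well. Hence, for every $\varepsilon>0$, the function
$$
t\mapsto \frac{p}{n}\cdot\frac{\bigl|K(t)+_p \varepsilon\cdot L(t)\bigr|-|K(t)|}{\varepsilon}
$$
is convex on $I$. Passing $\varepsilon\to 0^+$ and using \eqref{e:L_p-mixed_volume} the pointwise limit equals $V_p(K(t),L(t))$; since a pointwise limit of convex functions is convex (just take $\varepsilon\to 0^+$ in the defining midpoint inequality), the claim follows.

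The only subtle point I foresee is verifying that the scaling convention $\varepsilon\cdot L$ implicit in \eqref{e:L_p-mixed_volume} is the $L_p$-scaling $\varepsilon^{1/p}L$, so that the rescaled family is again a linear parameter system, and that the origin-containment assumption on both families makes the $p$-sum unambiguously defined for every $t\in I$; both are granted by the hypotheses. Beyond this bookkeeping, the argument is entirely formal and does not require the integral representation of $V_p$, so it is valid in the full range $p\geq 1$ in which \eqref{e:p-sum} is available.
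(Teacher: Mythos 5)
Your proposal is correct and follows essentially the same route as the paper: both apply Theorem \ref{t:Bianchini_Colesanti_Lp-sum_shadow_systems} together with Corollary \ref{c:shepard_convexity} to get convexity of $t\mapsto|K(t)+_p\varepsilon\cdot L(t)|$, then use the $t$-independence of $|K(t)|$ and pass to the limit $\varepsilon\to0^+$ in the difference quotient defining $V_p$. Your extra remark that $\varepsilon\cdot L(t)$ (i.e.\ $\varepsilon^{1/p}L(t)$) is again a linear parameter system is a useful bit of bookkeeping that the paper leaves implicit, but the argument is the same.
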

\begin{proof}
    Let $\varepsilon> 0$ and $\phi_\varepsilon: I \to \R$ be the function given by
    $$
    \phi_\varepsilon(t) = \frac{|K(t)+_p\varepsilon\cdot L(t)| - |K|}{\varepsilon}.
    $$
    Theorem \ref{t:Bianchini_Colesanti_Lp-sum_shadow_systems} together with Corollary \ref{c:shepard_convexity} imply that $\phi_\varepsilon$ is a convex function for every $\varepsilon > 0$. Moreover, using that  $K(t)$ and $L(t)$ contain the origin for any $t\in I$, we have that $K(t)+_p\,\varepsilon\cdot L(t)$ is a convex body for every $t \in I$ and $\varepsilon >0$. Hence, the limit $\lim_{\varepsilon \to 0^+}\phi_\varepsilon(t)$ always exists. Finally, taking into account that $V_p\bigl(K(t),L(t)\bigr) = \lim_{\varepsilon \to 0^+}\frac{p}{n}\phi_{\varepsilon}(t)$, the assertion immediately follows.
\end{proof}
Recently, Cao, Wang and Wang \cite{CWW} establish, among other related results, an $L_p$ analogue of the so-called Steiner inequality. Specifically, they proved that, for any convex body $K$ containing the origin in its interior, its $L_p$ surface area $S_p(K)$ does not increase under Steiner symmetrization. Note that $S_p(K) = V_p(K,B_2^n)$, and hence Lemma \ref{p:Lp-mixed_volumes_convexity} implies the following.
\begin{proposition}\label{p:Lp-surface_area_convexity}
    Let $\{K(t)\}_{t\in I}$ be a linear parameter system containing the origin for all $t\in I$. Then,  $t \mapsto S_p\bigl(K(t)\bigr)$ is convex for every $p\geq1$.
\end{proposition}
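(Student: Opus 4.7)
The plan is to apply Lemma \ref{p:Lp-mixed_volumes_convexity} directly. The identity $S_p(K)=V_p(K,B_2^n)$, already noted by the author, reduces the problem to showing that $t\mapsto V_p(K(t),B_2^n)$ is convex, and this is precisely the type of statement Lemma \ref{p:Lp-mixed_volumes_convexity} is designed to produce when the second input is taken as a constant auxiliary family.

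Concretely, I would first observe that the constant family $\{L(t)\equiv B_2^n\}_{t\in I}$ is a (trivial) linear parameter system along the direction $u\in\s^{n-1}$ of $\{K(t)\}$: in the representation \eqref{e:linear_parameter_system} one takes any parameterization $\{x_j\}_{j\in\mathcal J}$ whose convex hull equals $B_2^n$ and sets all speeds $\lambda_j=0$. The body $B_2^n$ evidently contains the origin, and $|L(t)|=|B_2^n|$ is independent of $t$. Then the hypotheses of Lemma \ref{p:Lp-mixed_volumes_convexity} on the second system are met, and invoking the lemma produces the convexity of $t\mapsto V_p(K(t),B_2^n)=S_p(K(t))$ for every $p\geq 1$, which is exactly the assertion.

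The delicate point---the one I would treat most carefully---is the hypothesis ``$|K(t)|$ does not depend on $t$'' built into Lemma \ref{p:Lp-mixed_volumes_convexity}. In the application motivating the proposition, namely recovering the Cao--Wang--Wang $L_p$-Steiner inequality, the relevant family is the shadow system $\{K_u(t)\}_{t\in[-1,1]}$ from \eqref{e:shadow_system}, which satisfies $|K_u(t)|=|K|$ for every $t$, so this constant-volume condition is automatic in the case of interest. For a genuinely arbitrary linear parameter system without constant volume, the same limiting argument must accommodate the extra convex-minus-convex contribution from $|K(t)|/\varepsilon$ in the incremental quotient defining $V_p$; for $p=1$ this is salvaged by the classical Steiner polynomial expansion combined with Theorem \ref{t:convexity_Mixed_Volumes)}, but for $p>1$ the direct route genuinely seems to require the constant-volume (equivalently, shadow-system) hypothesis, which is therefore what I would tacitly assume in order to apply Lemma \ref{p:Lp-mixed_volumes_convexity} as stated.
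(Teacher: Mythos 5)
Your proposal takes exactly the paper's route: the paper derives the proposition in one line from the identity $S_p(K)=V_p(K,B_2^n)$ together with Lemma \ref{p:Lp-mixed_volumes_convexity}, with the unit ball playing the role of the (trivial, constant) second linear parameter system, just as you set it up. Your caveat about the constant-volume hypothesis is well taken: Lemma \ref{p:Lp-mixed_volumes_convexity} does require $|K(t)|$ to be independent of $t$ (the incremental quotient subtracts $|K|$), while the proposition as stated omits this assumption, so strictly speaking your argument---and the paper's---proves the statement under that extra hypothesis; since the only application, Corollary \ref{c:CaoWangWang}, uses the volume-preserving shadow system \eqref{e:shadow_system}, nothing downstream is affected.
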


As a consequence, we recover the result of Cao, Wang and Wang.
\begin{corollary}\label{c:CaoWangWang}
    Let $K\subset\R^n$ be a convex body containing the origin in its interior. Then, for every $u\in\s^{n-1}$,
    $$
    S_p\bigl(S^u(K)\bigr)\leq S_p(K).
    $$
\end{corollary}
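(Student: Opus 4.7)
The plan is to apply Proposition \ref{p:Lp-surface_area_convexity} to the particular linear parameter system $\{K_u(t)\}_{t\in[-1,1]}$ defined in \eqref{e:shadow_system}, and then exploit the symmetry of the endpoints. Recall that this system was constructed precisely so that $K_u(1)=K$, $K_u(-1)=R_uK$, and $K_u(0)=S_uK$, so the Steiner symmetral sits at the midpoint of a reflection pair.

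First, I would verify that the hypothesis of Proposition \ref{p:Lp-surface_area_convexity}, namely that $K_u(t)$ contains the origin for every $t\in[-1,1]$, is automatic from the assumption $0\in\inter K$. Indeed, $0\in\inter K$ gives $f_u(0)<0<-g_u(0)$, i.e.\ both $f_u(0)$ and $g_u(0)$ are strictly negative, and then the convex combinations defining $f_u^t(0)$ and $-g_u^t(0)$ keep the required sign for every $t\in[-1,1]$. Hence $0\in\inter K_u(t)$ throughout, and Proposition \ref{p:Lp-surface_area_convexity} applies, yielding that $t\mapsto S_p(K_u(t))$ is convex on $[-1,1]$.

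Next, I would invoke the convexity at the midpoint $t=\tfrac12(1)+\tfrac12(-1)=0$ to get
\begin{equation*}
S_p(S_uK)=S_p\bigl(K_u(0)\bigr)\leq \tfrac12 S_p\bigl(K_u(1)\bigr)+\tfrac12 S_p\bigl(K_u(-1)\bigr)=\tfrac12 S_p(K)+\tfrac12 S_p(R_uK).
\end{equation*}
The proof then closes with the observation $S_p(R_uK)=S_p(K)$, which follows because $S_p(\cdot)=V_p(\cdot,B_2^n)$ and both the definition of $V_p$ via \eqref{e:L_p-mixed_volume} and the Euclidean ball $B_2^n$ are invariant under the orthogonal reflection $R_u$.

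There is essentially no obstacle here beyond bookkeeping: the heavy lifting was already done in Lemma \ref{p:Lp-mixed_volumes_convexity} and Proposition \ref{p:Lp-surface_area_convexity}, which reduce the monotonicity under Steiner symmetrization to midpoint convexity along the shadow system. The only point deserving a line of care is the reflection-invariance of $S_p$, which is what lets us merge the two endpoint values into $S_p(K)$.
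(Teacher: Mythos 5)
Your proposal is correct and follows the paper's own argument: apply Proposition \ref{p:Lp-surface_area_convexity} to the shadow system $\{K_u(t)\}_{t\in[-1,1]}$ from \eqref{e:shadow_system}, use midpoint convexity at $t=0$, and conclude via $S_p(R_uK)=S_p(K)$. The extra checks you include (origin containment along the system and reflection invariance of $S_p$) are exactly the points the paper leaves implicit, so there is nothing to change.
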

\begin{proof}
    Considering $\{K_u(t)\}_{t\in [-1,1]}$ defined in \eqref{e:shadow_system} (which contains the origin in its interior for all $t\in [-1,1]$), we get that
$$
S_p(S^u\bigl(K)\bigr) = S_p\bigl(K_u(0)\bigr) \leq \frac{1}{2}S_p(K) + \frac{1}{2}S_p\bigl(R^u(K)\bigr) = S_p(K)
$$
as desired.
\end{proof}

We need to introduce some notation before proving the main result of the section.  Let $x = (x_1, \dots, x_m) \in \mathbb{R}^{nm}$ and $u \in \mathbb{S}^{n-1}$. By considering the orthogonal decomposition in $\mathbb{R}^n$ with respect to $u$, each vector $x_i$ can be written as
\[
x_i = y_i + s_i u
\]
where $y_i \in u^\perp$ and $s_i \in \mathbb{R}$. Thus, setting $y = (y_1, \ldots, y_m) \in (u^\perp)^m$ and $\bar{s} = (s_1, \ldots, s_m) \in \mathbb{R}^m$. The vector $x$ can then be expressed as $x = y + \bar{s} \otimes u$, where $\otimes$ denotes the usual Kronecker product; that is, $\bar{s} \otimes u = (s_1u, \ldots, s_m u)$.
\begin{proof}[Proof of Theorem \ref{t:Q-Petty}]
    Let $\nu$ be a rotationally invariant, convex measure in $\R^{nm}$ with  $\dlat \nu(x) = \phi(x)\,\dlat x$ ($\phi$ is $-1/nm$-concave). We emphasize that $ x = (x_1,\dots,x_m)$, where each $x_i$ is a vector in $\R^n$. To make notation more compact $\dlat x$ stands for $\dlat x_1 \cdot \cdot \cdot \,\dlat x_m$ whereas $\bar s = (s_1,...,s_m)\in \R^m$.
    
    Let $\{K_u(t)\}_{t \in [-1,1]}$ be the linear parameter system defined in \eqref{e:shadow_system}. Then, we obtain from \eqref{e:minkowski_functional_(L_p,Q)} that
    \begin{align*}
        \nu\Bigl(\pp_{Q,p}\bigl(K_u(t)\bigr)\Bigr) &= \int_{(\R^n)^m} \chi_{_{\{nV_{p}(K_u(t),x.Q^t)\leq 1\}}}(x)\,\phi(x)\,\dlat x\\
        &= \int_{(u^\perp)^m} \int_{\R^m}  \chi_{_{\{nV_{p}(K_u(t), (y + \bar s \otimes u).Q^t)\}}}(y + \bar s \otimes u)\,\phi_{y}(\bar s)\, \dlat \bar s\,\dlat y \\
        &= \int_{(u^\perp)^m} F(t,y)\,\dlat y,
    \end{align*}
    where $F(t,y):[-1,1]\times(u^\perp)^m \to \R_+$ is the  function given by
    $$
    F(t,y)= \int_{\R^m} \chi_{_{\{f_p( t, \bar s) \leq 1\}}}(t,\bar s)\phi_{y}(\bar s)\,\dlat \bar s,
    $$ 
     $\phi_{y}(\bar s) = \phi(y + \bar s \otimes u)$ and $ f_p( t, \bar s) = nV_{p}\bigl(K_u(t),(y + \bar s \otimes u).Q^t\bigr)$. On the one hand, we have that 
    \begin{align*}
    f_p(- t, -\bar s) &= nV_p\bigl(K_u(-t),(y -\bar s \otimes u).Q^t\bigr)\\
    &= nV_p\Bigl(R^u\bigl(K_u(t)\bigr),R^u\bigl((y + \bar s \otimes u).Q^t\bigr)\Bigr)\\
    &=f_p( t, \bar s).
\end{align*}
    
    On the other hand, let  $\lambda\in (0,1)$ and $( t,\bar s ),( t^\prime,\bar s^\prime )\in [-1,1]\times \R^m$. Then,
    \begin{align*}
         f_p&\bigl((1-\lambda) t+ \lambda  t^\prime ,(1-\lambda)\bar s+ \lambda \bar s^\prime \bigl)\\
         &=nV_p\Bigr(K_u\bigl((1-\lambda) t+ \lambda  t^\prime\bigr), \bigl(y + ((1-\lambda)\bar s+ \lambda \bar s^\prime)  \otimes u\bigr).Q^t\Bigr).
    \end{align*}
    Note that $\{K_u\bigl((1-\lambda)t+ \lambda t^\prime\bigr)\}_{\lambda\in (0,1)} = \{K_u(\lambda)\}_{\lambda \in (0,1)}$ is also a linear parameter system. Moreover, $K_u(\lambda)$ contains the origin and $|K_u(\lambda)| = |K|$ for all $\lambda \in (0,1)$. In addition, for any $q = (q^1,...,q^m)\in Q$ we have that $(y + \bar s \otimes u).q^t = q^1(y_1 + s_1u) + ...+(y_m + s_mu)q^m$.  Therefore,
\begin{align*}
   \bigl(y &+ ((1-\lambda)\bar s+ \lambda \bar s^\prime)  \otimes u\bigr).Q^t \\
    &= \left\{\sum_{i=1}^m q^i(y_i + s_i u) + \lambda \sum_{i=1}^m q^i (s_i^\prime - s_i)u : q = (q^1,...,q^m)\in Q\right\}\\
    &= \left\{x_q + \mu_q \lambda u : q \in Q\right\},
\end{align*}
i.e., $ \left\{\bigl(y + ((1-\lambda)\bar s+ \lambda \bar s^\prime)  \otimes u\bigr).Q^t\right\}_{\lambda\in(0,1)}$ is a linear parameter system of the form \eqref{e:linear_parameter_system}, generated by the sets $\{x_q\}_{q\in Q}$ and $\{\mu_q\}_{q\in Q}$, which contains the origin for all $\lambda \in (0,1)$. Hence, using Lemma \ref{p:Lp-mixed_volumes_convexity}, we get that $\lambda \mapsto  f_p\bigl((1-\lambda)t+ \lambda  t^\prime ,(1-\lambda)\bar s+ \lambda \bar s^\prime \bigl)$ is convex, which yields the joint convexity of $f_p( t, \bar s)$. Thus, the function $\varphi:[-1,1]\times(u^\perp)^m \times \R^m$ given by $\varphi(t,y, \bar s) =  \chi_{_{\{f_p( t, \bar s) \leq 1\}}}(t,\bar s)\phi_{y}(\bar s)$ is $(-1/nm)$-concave. Now since
$$
 F(t,y)= \int_{\R^m}\varphi(t,y, \bar s)\,\dlat \bar s,
$$
Corollary \ref{c:marginals} implies that $F(t,y)$ is $\alpha$-concave, with $\alpha =\frac{1}{m(1-n)}$.  Moreover, considering $F_{y} (t) = F(t,y)$ for any fixed $y\in (u^\perp)^m$, by a change of variables and the evenness of $f_p( t,\bar s)$ we get that
\begin{align*}
    F_{y} (- t) &= \int_{\R^m} \chi_{_{\{f_p(- t, \bar s) \leq 1\}}}\phi_{y}(\bar s)\,\dlat \bar s\\
    &= \int_{\R^m} \chi_{_{\{f_p( t, -\bar s) \leq 1\}}}\phi_{y}(\bar s)\,\dlat \bar s\\
    &=\int_{\R^m} \chi_{_{\{f_p( t, \bar s) \leq 1\}}}\phi_{y}(-\bar s)\,\dlat \bar s\\
    &= F_{y} ( t),
\end{align*}
where in the last identity we have used the rotational invariance of the measure $\nu$. The latter, together with the fact that $F_{y} (t)$ is $\alpha$-concave, implies that $F_{y} ( t)  \leq F_{y} (0)$ for all $t\in[-1,1]$ and $y \in (u^\perp)^m$. In particular,
$$
 \nu\left(\pp_{Q,p}(K)\right) =  \int_{(u^\perp)^m} F_{y} (1)\,\dlat y \leq  \int_{(u^\perp)^m} F_{y} (0)\,\dlat y =\nu\left(\pp_{Q,p}\bigr(S^u(K)\bigr)\right).
$$
The proof concludes from the following two facts; on the one hand, for any convex body $K$ one can find a sequence of Steiner symmetrizations which converges (w.r.t. the Hausdorff metric) to an Euclidean ball with same volume as $K$. On the other hand, the operator $\pp_{Q,p}$ is continuous w.r.t. the corresponding Hausdorff metric (see \cite[Proposition~3.8]{HLPRY23_2}). 
\end{proof}

\section{Empirical inequalities}\label{s:empirical}
This section is devoted to the proof of Theorem \ref{t:empirical-Q-Petty}. Although we essentially adapt the ideas used in \cite{PPT}, which build on the argument developed in \cite[Proposition~8.4]{MilYe}, we include a sketch of the proof for the sake of completeness. 

\begin{proof}[Proof of Theorem \ref{t:empirical-Q-Petty}]
     Let $\nu$ be a rotationally invariant, convex measure in $\R^{nm}$ with  $\dlat \nu(x) = \phi(x)\,\dlat x$ and let $u \in \s^{n-1}$ be fixed. For $w = (w_1,\dots, w_m)\in(u^\perp)^{\hid}$ and $y =(y_1,\dots y_N)\in (u^\perp)^N$ we define the function $f : \R^{N} \times \R^m \to \R_+$ given by 
$$
f(\bar t,\bar s) = nV\bigl(C_{y}(\bar t)[n-1],(w+ \bar s \otimes u).Q^t\bigr).
$$
Here we use the notation $C_{y}(\bar t) = [y_1 + t_1u,...,y_N + t_N u]C$ , where $\bar t = (t_1,\dots t_N)\in \R^N$ and $\bar s =(s_1,\dots,s_m)\in \R^m$. 

On the one hand, we have that 
    \begin{align*}
      \nu\Bigl(\pp_{Q}\bigl(C_{y}(\bar t)\bigr)\Bigr) &= \int_{(u^\perp)^m} \int_{\R^m}  \chi_{_{\{nV(C_{y}(\bar t)[n-1],(w+ \bar s \otimes u).Q^t)\leq 1\}}}\,\phi_{y}(\bar s) \,\dlat \bar s\,\dlat w\\
      &=\int_{(u^\perp)^m} \int_{\R^m}  \chi_{_{\{f(\bar t,\bar s)\leq 1\}}}\,\phi_{y}(\bar s)\,\dlat \bar s\,\dlat w \\
        &= \int_{(u^\perp)^m} F_{w,y} ( \bar t)\,\dlat w,
    \end{align*}
     where,  fixed $w$ and $y$, $F_{w,y} : \R^{N} \to \R_+$ is the function given by
$$
    F_{w,y} ( \bar t)= \int_{\R^m} \chi_{_{\{f( \bar t, \bar s) \leq 1\}}}\phi_{w}(\bar s)\,\dlat \bar s 
$$ 
and $\phi_{w}(\bar s) = \phi(w+ \bar s \otimes u)$. On the other hand, using Fubini's theorem
    \begin{align*}
         \E\Big[\nu\Bigl(&\pp_{Q}\bigl([X_1,...,X_N]C\bigr)\Bigr)\Big]\\ &= \int_{(u^\perp)^N}\int_{\R^N} \nu\Bigl(\pp_{Q}\bigl(C_{y}(\bar t)\bigr)\Bigr)\prod_{i=1}^N f_i(y_i + t_iu)\,\dlat \bar t \,\dlat y\\
         & = \int_{(u^\perp)^{N}}\int_{(u^\perp)^m}\int_{\R^N} F_{w, y} ( \bar t)\,\prod_{i=1}^N f_i(y_i + t_iu)\,\dlat \bar t \,\dlat w \,\dlat y.
    \end{align*}
     Let now $\lambda \in (0,1)$ and $t,t^\prime \in \R^N$. Then both 
     $$
     \left\{C_{y}\bigl((1-\lambda)t + \lambda t^\prime\bigr)\right\}_{\lambda\in (0,1)} \quad \mathrm{and}\quad \left\{\bigl(w + ((1-\lambda)\bar s + \bar s^\prime)\otimes u\bigr).Q^t\right\}_{\lambda \in (0,1)}
     $$ 
     are linear parameter systems in the direction $u$. Therefore, Theorem \ref{t:convexity_Mixed_Volumes)} implies that $f(\bar t, \bar s)$ is jointly convex. Moreover, since $C_{y}(- \bar t) = R^u\bigl(C_{y}(\bar t)\bigr)$ and $(w-\bar s \otimes u).Q^t = R^u\bigl((w+ \bar s \otimes u).Q^t\bigr)$, it follows that $f(\bar t, \bar s)$ is even. The latter two facts, together with Corollary \ref{c:marginals} and the rotational invariance of the measure $\nu$, imply that, for any fixed $u\in \s^{n-1}$, $F_{w,y} ( \bar t)$ is an $\alpha$-concave (and, in particular, quasi-concave), even function for every $w \in (u^\perp)^m$ and $y\in (u^\perp)^N$. Hence, using the one-dimensional case of \eqref{e:Christ_BL},
     $$
     \int_{\R^N} F_{w, y} ( \bar t)\,\prod_{i=1}^N f_i(y_i + t_iu)\,\dlat \bar t\leq \int_{\R^N} F_{w, y} ( \bar t)\,\prod_{i=1}^N f^u_i(y_i + t_iu)\,\dlat \bar t
     $$
     for all $w \in (u^\perp)^m$ and $y\in (u^\perp)^N$. As a consequence,
        \begin{align*}
         \E\Big[\nu\Bigl(&\pp_{Q}\bigl([X_1,...,X_N]C\bigr)\Bigr)\Big]\\ &= \int_{(u^\perp)^{N}}\int_{(u^\perp)^m}\int_{\R^N} F_{w, y} ( \bar t)\,\prod_{i=1}^N f_i(y_i + t_iu)\,\dlat \bar t \,\dlat w \,\dlat y\\
         &\leq \int_{(u^\perp)^{N}}\int_{(u^\perp)^m}\int_{\R^N} F_{w, y} ( \bar t)\,\prod_{i=1}^N f^u_i(y_i + t_iu)\,\dlat \bar t \,\dlat w \,\dlat y\\
         &= \E\Big[\nu\Bigl(\pp_{Q}\bigl([X^u_1,...,X^u_N]C\bigr)\Bigr)\Big],
    \end{align*}
    where $\{X_i^u\}_{i=1}^N$ are independent random vectors distributed w.r.t the densities $\{f_i^u\}_{i=1}^N$. Finally, after a sequence of Steiner symmetrizations with respect to suitable directions, we get that
    $$
    \E\left[\nu\Bigl(\pp_{Q}\bigl([X_1,...,X_N]C\bigr)\Bigr)\right] \leq \E\left[\nu\Bigl(\pp_{Q}\bigl([X^*_1,...,X^*_N]C\bigr)\Bigr)\right]
    $$ 
    as we wanted to prove.
\end{proof}

\begin{remark}
    Attending to the arguments used in the proofs of the Theorems \ref{t:Q-Petty} and \ref{t:empirical-Q-Petty}, it may be natural to wonder about possible empirical results in the $L_p$ setting. In this regard, we note that the employed strategies heavily rely on the fact that $\left\{C_y\bigl((1-\lambda)t + \lambda t^\prime\bigr)\right\}_{\lambda\in (0,1)}$ is a linear parameter system and the convexity of mixed volumes. In contrast, when dealing with $L_p$ mixed volumes $V_p(K,L)$, we are only able to prove the convexity of the function $t \mapsto V_p\bigl(K(t),L(t)\bigl)$ when $|K(t)|$ does not depend on $t$  (see Lemma \ref{p:Lp-mixed_volumes_convexity}) which is certainly not the case of $\left\{C_{y}\bigl((1-\lambda)t + \lambda t^\prime\bigr)\right\}_{\lambda\in (0,1)}$.
\end{remark}

{\bf Acknowledgments:}  We sincerely thank the anonymous referee whose valuable remarks have greatly enhanced the clarity and quality of our manuscript.
\bibliographystyle{acm}
\bibliography{references_thesis}

\begin{thebibliography}{10}

\bibitem{BiaCol08}
{\sc Bianchini, C., and Colesanti, A.}
\newblock A sharp {R}ogers and {S}hephard inequality for the {$p$}-difference body of planar convex bodies.
\newblock {\em Proc. Amer. Math. Soc. 136}, 7 (2008), 2575--2582.

\bibitem{Borell}
{\sc Borell, C.}
\newblock Convex set functions in {$d$}-space.
\newblock {\em Period. Math. Hungar. 6}, 2 (1975), 111--136.

\bibitem{BL}
{\sc Brascamp, H.~J., and Lieb, E.~H.}
\newblock On extensions of the {B}runn-{M}inkowski and {P}r\'{e}kopa-{L}eindler theorems, including inequalities for log concave functions, and with an application to the diffusion equation.
\newblock {\em J. Functional Analysis 22}, 4 (1976), 366--389.

\bibitem{BLL74}
{\sc Brascamp, H.~J., Lieb, E.~H., and Luttinger, J.~M.}
\newblock A general rearrangement inequality for multiple integrals.
\newblock {\em J. Functional Analysis 17\/} (1974), 227--237.

\bibitem{Burchard}
{\sc Burchard, A.}
\newblock A short course on rearrangement inequalities.

\bibitem{CCG99}
{\sc Campi, S., and Colesanti, A.and~Gronchi, P.}
\newblock A note on {S}ylvester's problem for random polytopes in a convex body.
\newblock {\em Rend. Istit. Mat. Univ. Trieste 31}, 1-2 (1999), 79--94.

\bibitem{CampiGronchiLpBPC}
{\sc Campi, S., and Gronchi, P.}
\newblock The {$L^p$}-{B}usemann-{P}etty centroid inequality.
\newblock {\em Adv. Math. 167}, 1 (2002), 128--141.

\bibitem{CG06_2}
{\sc Campi, S., and Gronchi, P.}
\newblock Extremal convex sets for {S}ylvester-{B}usemann type functionals.
\newblock {\em Appl. Anal. 85}, 1-3 (2006), 129--141.

\bibitem{CG06}
{\sc Campi, S., and Gronchi, P.}
\newblock On volume product inequalities for convex sets.
\newblock {\em Proc. Amer. Math. Soc. 134}, 8 (2006), 2393--2402.

\bibitem{Christ84}
{\sc Christ, M.}
\newblock Estimates for the {$k$}-plane transform.
\newblock {\em Indiana Univ. Math. J. 33}, 6 (1984), 891--910.

\bibitem{CEFPP15}
{\sc Cordero-Erausquin, D., Fradelizi, M., Paouris, G., and Pivovarov, P.}
\newblock Volume of the polar of random sets and shadow systems.
\newblock {\em Math. Ann. 362}, 3-4 (2015), 1305--1325.

\bibitem{Firey62}
{\sc Firey, W.~J.}
\newblock {$p$}-means of convex bodies.
\newblock {\em Math. Scand. 10\/} (1962), 17--24.

\bibitem{G}
{\sc Gardner, R.~J.}
\newblock The {B}runn-{M}inkowski inequality.
\newblock {\em Bull. Amer. Math. Soc. 39}, 3 (2002), 355--405.

\bibitem{gardner_book}
{\sc Gardner, R.~J.}
\newblock {\em Geometric {T}omography}, 2nd~ed., vol.~58 of {\em Encyclopedia of Mathematics and its Applications}.
\newblock Cambridge University Press, Cambridge, UK, 2006.

\bibitem{G07}
{\sc Gruber, P.~M.}
\newblock {\em Convex and discrete geometry}, vol.~336 of {\em Grundlehren der Mathematischen Wissen\-schaften [Fundamental Principles of Mathematical Sciences]}.
\newblock Springer, Berlin, 2007.

\bibitem{HS09}
{\sc Haberl, C., and Schuster, F.~E.}
\newblock General {$L_p$} affine isoperimetric inequalities.
\newblock {\em J. Differential Geom. 83}, 1 (2009), 1--26.

\bibitem{HLPRY23}
{\sc Haddad, J.~E., Langharst, D., Putterman, E., Roysdon, M., and Ye, D.}
\newblock Affine isoperimetric inequalities for higher-order projection and centroid bodies.
\newblock {\em Preprint, {\tt arXiv:2304.07859}\/} (2023).

\bibitem{HLPRY23_2}
{\sc Haddad, J.~E., Langharst, D., Putterman, E., Roysdon, M., and Ye, D.}
\newblock {H}igher order {L}$_p$ isoperimetric and {S}obolev inequalities.
\newblock {\em J. Funct. Anal. 288\/} (2025), 110722.

\bibitem{LiebAnalysis}
{\sc Lieb, E.~H., and Loss, M.}
\newblock {\em {Analysis}}, vol.~14 of {\em Graduate Studies in Mathematics}.
\newblock American Mathematical Society, 2001.

\bibitem{ML02}
{\sc Ludwig, M.}
\newblock Projection bodies and valuations.
\newblock {\em Adv. Math. 172}, 2 (2002), 158--168.

\bibitem{ML05}
{\sc Ludwig, M.}
\newblock Minkowski valuations.
\newblock {\em Trans. Amer. Math. Soc. 357}, 10 (2005), 4191--4213.

\bibitem{LE93}
{\sc Lutwak, E.}
\newblock The {B}runn-{M}inkowski-{F}irey theory. {I}. {M}ixed volumes and the {M}inkowski problem.
\newblock {\em J. Differential Geom. 38}, 1 (1993), 131--150.

\bibitem{LE96}
{\sc Lutwak, E.}
\newblock The {B}runn-{M}inkowski-{F}irey theory. {II}. {A}ffine and geominimal surface areas.
\newblock {\em Adv. Math. 118}, 2 (1996), 244--294.

\bibitem{LYZ00}
{\sc Lutwak, E., Yang, D., and Zhang, G.}
\newblock {$L_p$} affine isoperimetric inequalities.
\newblock {\em J. Differential Geom. 56}, 1 (2000), 111--132.

\bibitem{LYZ12-Lp}
{\sc Lutwak, E., Yang, D., and Zhang, G.}
\newblock The {B}runn-{M}inkowski-{F}irey inequality for nonconvex sets.
\newblock {\em Adv. in Appl. Math. 48}, 2 (2012), 407--413.

\bibitem{MilYe}
{\sc Milman, E., and Yehudayoff, A.}
\newblock Sharp isoperimetric inequalities for affine quermassintegrals.
\newblock {\em J. Amer. Math. Soc. 36}, 4 (2023), 1061--1101.

\bibitem{PP12}
{\sc Paouris, G., and Pivovarov, P.}
\newblock A probabilistic take on isoperimetric-type inequalities.
\newblock {\em Adv. Math. 230}, 3 (2012), 1402--1422.

\bibitem{PP13-2}
{\sc Paouris, G., and Pivovarov, P.}
\newblock Small-ball probabilities for the volume of random convex sets.
\newblock {\em Discrete Comput. Geom. 49}, 3 (2013), 601--646.

\bibitem{PP17-1}
{\sc Paouris, G., and Pivovarov, P.}
\newblock Random ball-polyhedra and inequalities for intrinsic volumes.
\newblock {\em Monatsh. Math. 182}, 3 (2017), 709--729.

\bibitem{PP17-2}
{\sc Paouris, G., and Pivovarov, P.}
\newblock Randomized isoperimetric inequalities.
\newblock In {\em Convexity and concentration}, vol.~161 of {\em IMA Vol. Math. Appl.} Springer, New York, 2017, pp.~391--425.

\bibitem{PPT}
{\sc Paouris, P., Pivovarov, P., and Tatarko, K.}
\newblock Empirical forms of the petty projection inequality.
\newblock {\em Preprint, {\tt arxiv: 2501.00253}\/} (2025).

\bibitem{CMP71}
{\sc Petty, C.~M.}
\newblock Isoperimetric problems.
\newblock In {\em Proceedings of the {C}onference on {C}onvexity and {C}ombinatorial {G}eometry ({U}niv. {O}klahoma, {N}orman, {O}kla., 1971)}. Dept. Math., Univ. Oklahoma, Norman, Oklahoma, 1971, pp.~26--41.

\bibitem{RS58:1}
{\sc Rogers, C.~A., and Shephard, G.~C.}
\newblock Convex bodies associated with a given convex body.
\newblock {\em J. London Math. Soc. 33\/} (1958), 270--281.

\bibitem{RS58:2}
{\sc Rogers, C.~A., and Shephard, G.~C.}
\newblock Some extremal problems for convex bodies.
\newblock {\em Mathematika 5\/} (1958), 93--102.

\bibitem{Sch2}
{\sc Schneider, R.}
\newblock {\em Convex bodies: the {B}runn-{M}inkowski theory}, second expanded~ed., vol.~151 of {\em Encyclopedia of Mathematics and its Applications}.
\newblock Cambridge University Press, Cambridge, 2014.

\bibitem{BSConv11}
{\sc Simon, B.}
\newblock {\em Convexity}, vol.~187 of {\em Cambridge Tracts in Mathematics}.
\newblock Cambridge University Press, Cambridge, 2011.
\newblock An analytic viewpoint.

\bibitem{Ye24}
{\sc Xia, Z., Ye, D., and Zhang, Z.}
\newblock The $m$th order orlicz projection bodies.
\newblock {\em Preprint, {\tt arxiv: 2501.07565}\/} (2025).

\bibitem{CWW}
{\sc Zixin, C., Wang, T., and Wang, Y.}
\newblock On the steiner inequality for the ${L}_p$ surface area.
\newblock {\em Adv. Math 458\/} (2024), 109997.

\end{thebibliography}
\end{document}